\definecolor{mycolor}{rgb}{0.122, 0.435, 0.698}
\newmdenv[innerlinewidth=0.5pt, roundcorner=4pt,linecolor=mycolor,innerleftmargin=6pt,
innerrightmargin=6pt,innertopmargin=6pt,innerbottommargin=6pt]{mybox}
\newcommand{\redsout}{\bgroup\markoverwith{\textcolor{red}{\rule[0.5ex]{2pt}{.4pt}}}\ULon}
\newcommand{\LV}{\left|}
\newcommand{\RV}{\right|}
\newcommand{\LC}{\left(}
\newcommand{\RC}{\right)}
\newcommand{\p}{\partial}
\numberwithin{equation}{section}
\newtheorem{theorem}{Theorem}[section]
\newtheorem{proposition}{Proposition}[section]
\newtheorem{lemma}{Lemma}[section]
\newtheorem{definition}{Definition}[section]
\newtheorem{remark}{Remark}[section]
\newcommand{\R}{\mathbb R}
\begin{document}

\author[Lai]{Ru-Yu Lai}
\address{School of Mathematics, University of Minnesota, Minneapolis, MN 55455, USA}
\curraddr{}
\email{rylai@umn.edu }

\author[Zhou]{Hanming Zhou}
\address{Department of Mathematics, University of California Santa Barbara, Santa Barbara, CA 93106-3080, USA}
\curraddr{}
\email{hzhou@math.ucsb.edu}

\thanks{\textbf{Key words}: Inverse source problems, transport equations, attenuated X-ray transform, Santalo's formula}
 
\title[Inverse source problems in transport equations with external forces]{Inverse source problems in transport equations with external forces}
\date{\today}

\begin{abstract}
This paper is concerned with the inverse source problem for the transport equation with external force. 
We show that both direct and inverse problems are uniquely solvable for generic absorption and scattering coefficients. In particular, for inverse problems, generic injectivity and a stability estimate of the source are derived. The analysis employs the Fredholm theorem and the Santalo's formula. 

\end{abstract}

\maketitle
 
\section{Introduction} 
We study the inverse source problem for the transport equation with an external force. 
Let $\Omega$ be an open bounded domain in $\R^n$ for $n\geq 2$ with smooth boundary $\p\Omega$.   
We consider a stationary (i.e. time-independent) transport equation on $\Omega$ for a distribution function $u$ depending on position $x$ and velocity $\theta$: 
\begin{align}\label{ODE:transport}
\theta\cdot \nabla_x u(x,\theta)+ F(x,\theta) \cdot\nabla_\theta u(x,\theta)+ \sigma(x,\theta) u(x,\theta)= K(u)(x,\theta) + f(x),  
\end{align}
where $\sigma$ is the attenuation coefficient, which characterizes the total absorption of particles, and $f$ is the source term. The acceleration $F$ describes the total external force acting on the particles. 
Moreover, the scattering operator $K$ is defined by
\begin{equation}\label{scattering operator}
    K(u)(x,\theta) := \int_{V_x} k(x,\theta,\theta') u(x, \theta')\,d\theta' \quad \mbox{for}\quad \theta\in V_x,
\end{equation}
where the kernel $k(x,\theta,\theta')$ characterizes that the particle with the velocity $\theta'$ is scattered into the velocity $\theta$ at the position $x$. Here $\{V_x: x\in\overline\Omega\}$ is a collection of subsets of $\mathbb R^n\setminus \{0\}$, depending smoothly on $x$, to be defined later.

In this paper, we are interested in the case that the external force $F$ has the following form 
\begin{equation}\label{Lorentz force}
F(x,\theta)=-\nabla_x \varphi(x) + Y(x)\, \theta,
\end{equation}
where $\varphi(x)$ is a smooth scalar potential and $Y(x)$ a smooth $n\times n$ skew-symmetric matrix function depending only on the spatial variable (thus can be viewed as a 1-1 tensor). The term $Y(x)\, \theta$ can be understood as the product of a matrix with a column vector $\theta$. For example, in electromagnetism, $-\nabla_x \varphi$ and $Y$ represent the electric and magnetic fields respectively, and $F$ is the Lorentz force induced by the electromagentic field. In the three-dimensional setting, the degree of freedom of $Y$ is $3$, it is easy to see that $Y \theta=\theta\times B$ for some vector function $B$. Here `$\times$' is the cross-product in $\mathbb R^3$. Therefore $Y$ and $B$ are equivalent in three space dimensions, with $B$ the common notation for the magnetic field.

Notice that the transport operator $\theta\cdot \nabla_x+F(x,\theta)\cdot\nabla_\theta$, with the external force $F$ defined by \eqref{Lorentz force}, induces a Hamiltonian flow with the Hamiltonian function (or energy) $\frac{1}{2}|\theta|^2+\varphi(x)$. We define the sphere bundle over $\Omega$ of energy $\tau>\max  \{ \max_{x\in\overline\Omega}\varphi(x), 0\}$ by
$$
S^\tau\Omega:=\{(x,\theta)\in \Omega\times (\mathbb R^n\setminus\{0\}): \frac{1}{2}|\theta|^2+\varphi(x)=\tau\}
$$
with its boundary $\p S^\tau \Omega$. The fiber of the bundle at $x\in\Omega$ is denoted by $S^\tau_x\Omega$, which is exactly the set $V_x$ in the definition of the scattering operator $K$, see \eqref{scattering operator}. Hence the transport equation \eqref{ODE:transport} stays on the bundle $S^\tau\Omega$.
Let $\p_+ S^\tau\Omega$ and $\p_- S^\tau\Omega$ be the outgoing and incoming boundaries respectively and they are defined by
\begin{align*}
\p_\pm S^\tau\Omega := \{(x,\theta)\in \p S^\tau \Omega:\, x\in \p\Omega,\, \pm n(x)\cdot \theta > 0\},
\end{align*}
where $n(x)$ is the unit outer normal vector at $x\in\p\Omega$.
We also denote
$$S^\tau\Omega^2:=\{(x,\theta, \theta'): x\in \Omega,\; \theta, \theta'\in S_x^\tau \Omega\}.$$
See Section \ref{sec:notations} for more definitions and notations. 

\subsection{Inverse Problems and related results}
Consider the transport equation \eqref{ODE:transport} with trivial condition on the incoming boundary
\begin{align}\label{EQN:transport}
\left\{
\begin{array}{ll}
\theta\cdot \nabla_x u(x,\theta)+ F(x,\theta) \cdot\nabla_\theta u(x,\theta)+ \sigma(x,\theta) u(x,\theta)= K(u)(x,\theta) + f(x) & \hbox{in } S^\tau\Omega,\\
u (x,\theta)= 0 &\hbox{on } \p_- S^\tau\Omega.
\end{array} 
\right.
\end{align}
The inverse problem we consider here is to recover the source term $f$ from boundary measurements $\mathcal{A}:L^2(\Omega)\rightarrow L^2(\p_+S^\tau \Omega, d\xi)$ modeled by
$$
    \mathcal{A}f(x,\theta) := u|_{\p_+S^\tau \Omega}\quad \hbox{for all }(x,\theta)\in \p_+S^\tau \Omega,
$$
where $d\xi(x,\theta)= |\theta\cdot n(x)|d\mu(x) d\theta$ and $d\mu(x)$ is Lebesgue measure on $\p\Omega$. 

Before describing our main results, we briefly introduce some related studies.
When there is no external force in \eqref{EQN:transport}, i.e. $F\equiv 0$ (so $\varphi=c$ for some constant $c$, $Y=0$), the particles travel along straight lines. The injectivity of the inverse source problem in the trivial geometry is known in the case of small scattering kernel $k$ \cite{BalTamasan, FST2020} and generic pairs $(\sigma, k)$ \cite{SU2008}. Recent studies on Riemannian manifolds (non-trivial geometry) can be found in \cite{BalMonard19, Sharafutdinov94, Sharafutdinov96, Sharafutdinov99}.
For inverse source problems for time-dependent transport equations by applying Carleman estimates, see for instance \cite{Yamamoto2016, Lai-L, Machida14, MachidaYamamoto20} and the references therein.

If there is no scattering as well, i.e. $F\equiv 0$ and $k\equiv 0$ in \eqref{scattering operator}, then the inverse source problem is equivalent to the invertibility of the Euclidean attenuated X-ray transform. The corresponding boundary measurement $\mathcal A$ can be expressed as an integral
\begin{align}\label{DEF:Isigma}
     I_\sigma f(x,\theta) |_{\p_+(\Omega\times\mathbb S^{n-1})}:= \int_{-\infty}^0 e^{- \int_s^0 \sigma (x+t\theta, \theta) dt} f(x+s\theta) \,ds \quad  \hbox{for all }(x,\theta)\in \p_+(\Omega\times\mathbb S^{n-1}).
\end{align}
Notice that in this case, $\Omega\times\mathbb S^{n-1}=S^{1/2+c}\Omega$ since the energy is $|\theta|^2/2+\varphi = 1/2+c$. For attenuation coefficient $\sigma$ depending on $x$ only, it is known that $I_\sigma$ is injective with explicit inversion formulas \cite{ABK1998,  BS2004, Monard2018, Natterer2001, Novikov2002}. If the attenuation coefficient $\sigma$ depends on both $x$ and the velocity, i.e. $\sigma=\sigma(x,\theta)$, then injectivity results of $I_\sigma$ can be found in e.g. \cite{FSU, PSUZ2019, Zhou2017}.

When $F\neq 0$, under the influence of the acceleration field, the trajectories associated with the transport operator are not straight lines in general (specifically they are curves), and the velocities along such curves are not constant vectors. Therefore, in this scenario $I_\sigma f$   
corresponds to attenuated X-ray transforms along a family of curves instead of straight lines, see \cite{FSU} and \cite[Appendix]{UV2016}.

\subsection{Main Results}
Throughout this paper, we fix the constant $\tau$ and extend the source $f$ from $\Omega$ to the whole space $\R^n$ by $0$. This ensures that the line integral domain can be taken over the whole real line $\R$ without changing the value.
%

To study inverse problem, let $\Omega_1$ be another open bounded domain in $\mathbb R^n$ so that $\Omega\subset\overline \Omega\subset \Omega_1$. We extend the pair $(\sigma, k)$, the potential $\varphi$ and $Y$ to $\Omega_1$ with the same regularity, and such that $\max_{\overline\Omega_1}\varphi<\tau$ as well. We choose and fix such an extension as a continuous operator in those spaces. Define the operator $\mathcal A_1: L^2(\Omega_1)\to L^2(\p_+S^\tau \Omega_1,d\xi)$ in the same way as $\mathcal A$. In particular, we are interested in those $f\in L^2(\Omega_1)$ supported in $\overline\Omega$, which are equivalent to elements of $L^2(\Omega)$. Therefore, the restriction $\mathcal A_1: L^2(\Omega)\to L^2(\p_+S^\tau \Omega_1,d\xi)$ is also well-defined.

We also impose a convexity condition for the boundary $\p\Omega$ with respect to the underlying dynamical flow (or the external force). Note that the trajectories of the particles satisfy the following Newton's equation
\begin{equation}\label{Newton}
\ddot\gamma=F(\gamma,\dot\gamma)=-\nabla_x\varphi(\gamma)+Y(\gamma)\,\dot\gamma.
\end{equation}
We denote the solution of \eqref{Newton} with initial value $(x,\theta)\in S^\tau\overline\Omega$ by $\gamma_{x,\theta}$. See Section \ref{sec:notations} for more details.
\begin{definition}
Let $x\in\p\Omega$, we say that the boundary $\p\Omega$ is strictly convex at $x$ with respect to the external force $F$ if for any $\theta\in S^\tau_x\overline\Omega$ tangent to $\p\Omega$, there exists $\delta>0$ such that $\gamma_{x,\theta}(t)\notin \overline\Omega$ for $t\in(-\delta,\delta)\setminus\{0\}$. Moreover, the boundary $\p\Omega$ is said to be strictly convex with respect to $F$ if it is strictly convex with respect to $F$ at all $x\in\p\Omega$.
\end{definition}
When $F\equiv 0$, the above definition is consistent with the usual definition of a strictly convex domain. We will assume that both $\p\Omega$ and $\p\Omega_1$ are strictly convex with respect to $F$. Notice that this is not an essential assumption, one can always push away the boundary, and manipulate the extensions of $\varphi$ and $k$, to make it strictly convex with respect to $F$.



We say that the family of curves defined by \eqref{Newton} is {\it non-trapping} if for any $(x,\theta)\in S^\tau\overline\Omega$, the curve $\gamma=\gamma_{x,\theta}$ exits the domain $\Omega$ in finite time in both the forward and backward directions. In other words, the travel time functions $\ell_\pm$, defined in \eqref{def:travel time}, satisfy $|\ell_\pm(x,\theta)|<\infty$ for all $(x,\theta)\in S^\tau \overline\Omega$.

Our main result is about the uniqueness and stability of the inverse source problem for generic $(\sigma,k)$.

\begin{theorem}[Uniqueness and stability estimate]\label{THM:unique and stability}
	Let $\Omega$ be an open bounded domain in $\R^n$ for $n\geq 2$ with smooth strictly convex boundary $\p\Omega$ with respect to $F$. Suppose that the trajectories defined by \eqref{Newton} form a 
	family of non-trapping curves. 
	Let $f\in L^2(\Omega)$. 
	Then there exists an open and dense set $\mathscr{V}$ of pairs
	\begin{align}\label{condition sigma k}
	(\sigma(x, \theta),k(x,\theta,\theta'))\in C^2(S^\tau\overline\Omega) \times C^2(S^\tau\overline\Omega^2)   
	\end{align}
	including a neighborhood of $(0,0)$ such that for each $(\sigma,k)$ in the set $\mathscr{V}$, the direct problem \eqref{EQN:transport} is well-posed. Moreover, 
		suppose that the attenuated X-ray transform $I_{\sigma,F}$ (see \eqref{DEF:IsigmaF}) is injective, and
		$
		k(x,\theta,\theta') = \kappa_1(x,\theta) \kappa_2(x,\theta'),
		$ then the following results hold:
	\begin{enumerate}
		\item the map $\mathcal{A}_1$ is injective on $L^2(\Omega)$;
		\item the stability estimate
		\begin{align}\label{stability}
		\|f\|_{L^2(\Omega)}\leq C\|\mathcal{A}_1^*\mathcal{A}_1 f\|_{H^1(\Omega_1)} 
		\end{align}
		holds for all $f\in L^2(\Omega)$, where $C>0$ is a constant and locally uniform in $(\sigma,k)$.
	\end{enumerate}     
\end{theorem}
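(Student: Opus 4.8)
The plan is to convert \eqref{EQN:transport} into a second-kind integral equation, extract the genericity set $\mathscr{V}$ from the analytic Fredholm theorem, and then study the inverse problem through the normal operator $\mathcal{A}_1^*\mathcal{A}_1$, with Santalo's formula supplying the $L^2$ bounds and adjoint identities throughout. Writing $T=\theta\cdot\nabla_x+F\cdot\nabla_\theta$, the non-trapping and strict convexity hypotheses let me solve $(T+\sigma)v=g$ with $v|_{\p_-S^\tau\Omega_1}=0$ by integrating the integrating factor $e^{\int\sigma}$ along the curves $\gamma_{x,\theta}$ of \eqref{Newton}, producing a bounded operator $T_\sigma^{-1}$; here Santalo's formula (valid because $\operatorname{tr}Y=0$, so the Hamiltonian flow is volume preserving) controls $\|T_\sigma^{-1}\|_{L^2\to L^2}$ and computes its adjoint. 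Problem \eqref{EQN:transport} becomes $(I-T_\sigma^{-1}K)u=T_\sigma^{-1}f$, and the key point is that $T_\sigma^{-1}K$ is compact on $L^2(S^\tau\Omega_1)$: the velocity averaging in $K$ composed with the curve integration in $T_\sigma^{-1}$ supplies the gain, and for $k=\kappa_1\kappa_2$ this is transparent since $Ku=\kappa_1(x,\theta)\rho(x)$ with $\rho(x)=\int_{V_x}\kappa_2\,u\,d\theta'$ a function of $x$ alone. Since $(\sigma,k)\mapsto T_\sigma^{-1}K$ depends analytically on the parameters and vanishes at $(\sigma,k)=(0,0)$, where $I-0$ is invertible, the analytic Fredholm theorem yields an open dense set $\mathscr{V}$ containing a neighborhood of $(0,0)$ on which $I-T_\sigma^{-1}K$ is invertible; this is the asserted well-posedness.

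Next I would express the measurement through the attenuated X-ray transform. Decomposing $u=T_\sigma^{-1}f+T_\sigma^{-1}Ku$ and restricting to $\p_+S^\tau\Omega_1$ gives
\[
\mathcal{A}_1 f = I_{\sigma,F}f + \big(T_\sigma^{-1}(\kappa_1\rho_f)\big)\big|_{\p_+S^\tau\Omega_1},
\]
where, by the factorization, $\rho_f$ obeys the closed scalar equation $(I-L)\rho_f=g[f]$ with $L\rho=\int_{V_x}\kappa_2\,T_\sigma^{-1}(\kappa_1\rho)\,d\theta'$ and $g[f]=\int_{V_x}\kappa_2\,T_\sigma^{-1}f\,d\theta'$. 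On $\mathscr{V}$ the operator $I-L$ is invertible, so $\rho_f$ depends linearly and boundedly on $f$, and the scattered term is a smoothing perturbation of the ballistic term $I_{\sigma,F}f$ (see \eqref{DEF:IsigmaF}), again because it carries an extra fiber integration.

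For parts (1) and (2) I would pass to the normal operator. Computing $\mathcal{A}_1^*$ via Santalo's formula and using the decomposition above gives
\[
\mathcal{A}_1^*\mathcal{A}_1 = I_{\sigma,F}^*I_{\sigma,F} + \mathcal{K},
\]
with $\mathcal{K}$ smoothing. The operator $N_0:=I_{\sigma,F}^*I_{\sigma,F}$ is an elliptic pseudodifferential operator of order $-1$ attached to the curve family \eqref{Newton}, whose canonical relation is the identity on the conormal variety thanks to strict convexity and non-trapping; following the extension argument that motivated enlarging $\Omega$ to $\Omega_1$, this yields $\|f\|_{L^2(\Omega)}\le C\|N_0 f\|_{H^1(\Omega_1)}+C\|f\|_{H^{-s}(\Omega_1)}$ for $f$ supported in $\overline\Omega$ and some $s>0$. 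Near $(0,0)$ the term $\mathcal{K}$ is small, so $N_0+\mathcal{K}$ inherits the lower bound and both the injectivity of $\mathcal{A}_1$ and \eqref{stability} follow at once; for general $(\sigma,k)\in\mathscr{V}$ with $I_{\sigma,F}$ injective I would continue this from the neighborhood of $(0,0)$ by analytic Fredholm dependence in the parameters to conclude $\ker\mathcal{A}_1=\{0\}$. Given injectivity, the bound $\|\mathcal{K}f\|_{H^1(\Omega_1)}\le C\|f\|_{H^{-s}(\Omega_1)}$ together with the compact embedding $L^2\hookrightarrow H^{-s}$ lets the standard contradiction argument absorb the weak term and deliver \eqref{stability}, with $C$ locally uniform in $(\sigma,k)$ since every operator above depends continuously on the parameters.

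The main obstacle is the normal-operator analysis: proving that $N_0$ is an elliptic pseudodifferential operator of order $-1$, bounded below into $H^1(\Omega_1)$, for the \emph{curved} trajectories generated by the Lorentz force \eqref{Lorentz force} with \emph{velocity-dependent} attenuation $\sigma(x,\theta)$, rather than straight lines and $x$-dependent attenuation, and simultaneously verifying that the factorized scattering produces only a genuinely smoothing remainder $\mathcal{K}$. Controlling the curvature of the Hamiltonian flow and the $\theta$-dependence of $\sigma$ in the symbol computation, while keeping all constants locally uniform in $(\sigma,k)$ so as to justify the analytic continuation underlying injectivity, is the technical heart of the argument.
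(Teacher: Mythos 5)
Your proposal reproduces the paper's overall architecture (a second-kind integral equation plus the analytic Fredholm theorem for the forward problem; the splitting $\mathcal{A}_1=I_{\sigma,F}+L$; a parametrix for $I_{\sigma,F}^*I_{\sigma,F}$ and a Fredholm equation for the inverse problem), but at the two points where the paper does the real technical work you substitute assertions for proofs. The first gap is the compactness of $T_\sigma^{-1}K$ on $L^2(S^\tau\Omega_1)$, which is the linchpin of your well-posedness argument. Your justification under the factorization --- ``$Ku=\kappa_1(x,\theta)\rho(x)$, hence transparent'' --- only shows that $T_\sigma^{-1}K$ factors through $L^2(\Omega_1)$ as $[T_\sigma^{-1}\kappa_1\Pi]\circ[u\mapsto\rho_u]$; both factors are merely bounded, and factoring a bounded operator through another Hilbert space does not give compactness unless one factor is itself compact. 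The factor whose compactness you actually need, $T_\sigma^{-1}\kappa_1\Pi:L^2(\Omega_1)\to L^2(S^\tau\Omega_1)$, is exactly the kind of operator the paper flags as problematic: its adjoint is an ``average after transport solve'' operator acting on velocity-dependent functions, whose kernel is weakly singular in $(x,y)$ but carries a delta-type constraint in the velocity variable --- the same structure as $KT_1^{-1}$, which the paper asserts is \emph{not} compact (``insufficient integrals'') and deliberately circumvents by proving instead that $KT_1^{-1}K$ is compact (Proposition \ref{prop:compact 2}) and exploiting the identity $Id-(KT_1^{-1})^2=(Id-KT_1^{-1})(Id+KT_1^{-1})$. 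Your appeal to ``velocity averaging'' is also aimed in the wrong direction: in $T_\sigma^{-1}K$ the velocity integration occurs \emph{before} the transport solve, whereas averaging lemmas smooth averages \emph{of solutions}; to use them you would need a duality argument, on a bounded domain, along the curved trajectories of \eqref{Newton}, none of which you supply. (The claim itself is likely true and can be repaired, e.g.\ by a $T^*T$ argument that reduces to the weakly singular Proposition \ref{Prop:compact operator 1}, but that argument is absent.) Note finally that the well-posedness part of the theorem assumes no factorization of $k$, so even a repaired version of your argument would not cover the statement being proved, while the paper's Lemmas \ref{lemma:A1A2}--\ref{lemma:compact 1} and Proposition \ref{prop:compact 2} handle general $(\sigma,k)$.

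The second gap concerns how genericity and injectivity are actually extracted, and the smoothing of the perturbation. The analytic Fredholm theorem (Proposition \ref{analytic Fredholm}) applies to an operator family analytic in a \emph{complex scalar} variable; ``analytic dependence on the parameters $(\sigma,k)$'' ranging over a Banach space is not a setting in which it can be invoked. The paper's device is to complexify only the scattering strength, working with $(\sigma,\lambda k)$, $\lambda\in\mathbb{C}$: meromorphy in $\lambda$, invertibility of $Id+\widetilde R(0)$ (arranged by adding a finite-rank correction to the parametrix $Q$, following Stefanov--Uhlmann), and the fact that $\lambda=1$ is an accumulation point of the complement of the exceptional discrete set together yield a dense set of good pairs, with openness then obtained by perturbation. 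Your alternative --- continuing ``from a neighborhood of $(0,0)$ by analytic Fredholm dependence in the parameters'' to conclude $\ker\mathcal{A}_1=\{0\}$ for every $(\sigma,k)\in\mathscr{V}$ with $I_{\sigma,F}$ injective --- claims more than such a continuation can deliver: a perturbation from small $k$ says nothing at $k$ of unit size except along the complexified segment, and there only off a discrete set; this is precisely why the conclusion is generic rather than universal. Finally, your stability step requires the perturbation $\mathcal{K}=\mathcal{L}$ to map $L^2$ boundedly (indeed compactly) into $H^1(\Omega_1)$; this is exactly the content of the paper's Lemma \ref{compact pertubation} (compactness of $\mathcal{L}$ and of $\p_x\mathcal{L}$), whose proof is where the factorization $k=\kappa_1\kappa_2$ and Proposition \ref{bounded weakly singular} are genuinely used, and which you assert (``$\mathcal{K}$ smoothing'', ``an extra fiber integration'') without proof.
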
 

 \begin{remark}
 In the above theorem, we assume that the attenuated ray transform $I_{\sigma, F}$, along a family of curves determined by $F$ (or by $\varphi$ and $Y$), is injective. This does not affect much the generality of our result, as $I_{\sigma, F}$ is injective for generic $\varphi$ and $Y$, see \cite{FSU}. In dimension 3 and higher, $I_{\sigma, F}$ is injective if the family of curves satisfies a global foliation condition \cite[Appendix]{UV2016}.
 \end{remark}
 
	Since the dynamic of particle is impacted by both external force $F$ and the scattering term $K$,
	its corresponding boundary measurement $\mathcal{A}_1$ is of the form $I_{\sigma,F} + L$. Here
	\begin{align}\label{DEF:IsigmaF}
	I_{\sigma,F} f(x,\theta) |_{\p_+ S^\tau\Omega}:= \int_{-\infty}^0 e^{- \int_s^0 \sigma (\gamma_{x,\theta}(t), \dot\gamma_{x,\theta}(t)) dt} f(\gamma_{x,\theta}(s)) \,ds, \qquad  (x,\theta)\in \p_+ S^\tau\Omega 	
	\end{align}
    corresponds to attenuated X-ray transforms along a family of curves $\gamma_{x,\theta}$ with velocities $\dot\gamma_{x,\theta}$ and
	$L$ is a compact operator, contributed from the scattering effect. Therefore, the normal operator $\mathcal A^*_1\mathcal A_1$ is equal to $I^*_{\sigma,F}I_{\sigma,F}+\mathcal L$ with a compact operator $\mathcal L$. See Section \ref{sec:inverse problem} for the detailed definitions of compact operators $L$ and $\mathcal L$.
    Note that for the attenuated X-ray transform $I_{\sigma,F}$, its generic injectivity and stability (similar to Theorem \ref{THM:unique and stability}) are known due to \cite{FSU}, where the result even holds for a general family of curves. Since the operator $\mathcal A^*_1\mathcal A_1$ is a compact perturbation of $I^*_{\sigma,F}I_{\sigma,F}$, we then apply the analytic Fredholm theorem \cite{RS80} to establish the injectivity and stability of the operator $\mathcal A_1$, as well as the well-posedness of the forward problem, for generic pairs $(\sigma, k)$.



The remaining part of the paper is organized as follows. In Section~\ref{sec:forward problem}, we formulate the trajectories of particles under the influence of the external force and introduce notations and preliminary results. We also study the forward problem for the transport equation. 
In Section~\ref{sec:inverse problem}, we discuss the determination of the source. We derive uniqueness and stability estimates for a generic class of absorption and scattering coefficients by solving a Fredholm equation related to the normal operator.

\section{The forward problem}\label{sec:forward problem}
In this section, we start by introducing notations and spaces. We also formulate several lemmas and propositions used later to prove the well-posedness result in Section~\ref{sec:well-posedness}.

\subsection{Notations}\label{sec:notations}
The trajectory $(X, \Theta)$ of the flow, associated to the transport operator $\theta\cdot \nabla_x+F(x,\theta)\cdot\nabla_\theta$, is characterized by the following system: 
\begin{align}\label{ODE}
\left\{
\begin{array}{l}
\dot{X}(s) = \Theta(s) \\
\dot{\Theta}(s)=-\nabla_x\varphi(X(s)) + Y(X(s))\, \Theta\\
X(0) = x\\
\Theta(0)=\theta,
\end{array} 
\right.
\end{align}
with initial condition $(x,\theta)$ at initial time $s=0$. 
We introduce the energy
\begin{align*}
H(x,\theta) = {1\over 2} |\theta|^2 + \varphi(x),
\end{align*}
and note that this energy $H$ is conserved, that is, 
$$
H (X(s), \Theta(s)) =H(x,\theta).
$$
In fact, by taking the derivative with respect to $s$ on the energy $H (X(s), \Theta(s)) = {1\over 2} |\Theta(s)|^2 + \varphi(X(s))$, along any fixed flow,
we have
$$
{d\over ds} H(X(s), \Theta(s)) = \Theta(s)\cdot \dot{\Theta}(s) + \nabla_x \varphi(X(s)) \dot{X}(s) =0
$$
due to \eqref{ODE} and the fact that $Y(x)\,\theta$ is orthogonal to $\theta$ ( that is, $Y(x)\,\theta\cdot \theta=0$). This implies that along the flow the energy $H (X(s), \Theta(s))$ is a constant determined by the given initial data.

Therefore, from now on, we consider the energy level $H$ is a fixed positive constant along each flow, that is,  
\begin{align}\label{DEF:H}
H(x,\theta) = {1\over 2} |\theta|^2 + \varphi(x)\equiv \tau 
\end{align}
for some constant $\tau>0$ and also assume that $\max_{\overline\Omega}\varphi < \tau$.
Then \eqref{DEF:H} yields that for each $x\in\overline\Omega$, its velocity indeed lies on a sphere with the radius depending on $x$ only. We can also express the velocity as a function of $x$ and $v$:
$$
    \theta = p(x) v,  
$$
where $v\in\mathbb{S}^{n-1}$, the unit sphere, and the radius $p(x):=\sqrt{2 (\tau-\varphi(x))}$.
We define the sphere bundle over $\Omega$ with a fixed energy by
$$
S^\tau \Omega := \{(x,\theta) \in \Omega\times (\R^n\setminus\{0\}):\, H(x,\theta) = \tau\}
$$
and its boundary by
$$
\p S^\tau \Omega := \{(x,\theta) \in \p\Omega\times (\R^n\setminus\{0\}) :\, H(x,\theta) = \tau\}.
$$
Moreover, we denote the fiber of the bundle at each point $x\in \Omega$ by 
$$
S_x^\tau\Omega :=\{\theta = p(x)v:\, v\in\mathbb{S}^{n-1}\}.
$$
Recall that the outgoing and incoming boundaries are defined through
\begin{align*}
\p_\pm S^\tau\Omega := \{(x,\theta)\in \p S^\tau \Omega:\, x\in \p\Omega,\, \pm n(x)\cdot \theta > 0\},
\end{align*}
where $n(x)$ is the unit outer normal vector at $x\in\p\Omega$.

For $x\in\overline\Omega$ and $\theta\neq 0$, the curve $X(s)$ satisfying the initial conditions $X(0)=x$ and $\dot{X}(0)=\theta$ is defined on the interval $[\ell_-(x,\theta),\ell_+(x,\theta)]$, where
two travel time functions 
\begin{align}\label{def:travel time}
\ell_\pm: \overline\Omega\times (\R^n\setminus\{0\})\rightarrow \R 
\end{align}
are determined by $X(\ell_\pm(x,\theta))\in \p\Omega$.  
In particular, they satisfy $\pm\ell_\pm(x,\theta)\geq0$ and $\ell_+(x,\theta)|_{\p_+S^\tau\Omega}=\ell_-(x,\theta)|_{\p_-S^\tau\Omega}=0$.


\subsection{Preliminary results}\label{preliminary}
To emphasize the trajectory's dependence on the initial data, from now on we denote solutions to \eqref{ODE} with initial data $(x,\theta)$ by
$$
(\gamma_{x, \theta}(s),\dot\gamma_{x, \theta}(s)).
$$

We define the following two operators
$$
T_1 := \theta\cdot \nabla_x + F(x,\theta)\cdot\nabla_\theta + \sigma(x,\theta),
$$
and 
$$
T:= \theta\cdot \nabla_x + F(x,\theta)\cdot\nabla_\theta + \sigma(x,\theta) - K.
$$
Then $T=T_1-K$. The problem \eqref{EQN:transport} can be rewritten as    
\begin{align}\label{ODE:transport T}
   (T_1 - K)u = f,\qquad u|_{\p_-S^\tau\Omega}=0.
\end{align} 
At this point, we assume that $f$ depends on the velocity as well, i.e. $f=f(x,\theta)$. Applying $T_1^{-1}$ to \eqref{ODE:transport T}, we have
$$
   (Id-T_1^{-1} K )u=T_1^{-1}f,
$$
where $Id$ denotes the identity operator and 
the operator $T_1^{-1}$ is defined by
$$
T_1^{-1} f (x,\theta) := \int_{-\infty}^0 e^{- \int^0_s \sigma (\gamma_{x, \theta}(t), \dot\gamma_{x,\theta}(t)) dt} f(\gamma_{x, \theta}(s),\dot\gamma_{x, \theta}(s)) \,ds \quad \hbox{for all }(x,\theta)\in  S^\tau\overline\Omega.
$$
To simplify the expression, we denote the exponential term in the above integral by
$$W(s,x,\theta):=e^{- \int^0_s \sigma (\gamma_{x, \theta}(t), \dot\gamma_{x,\theta}(t)) dt}.$$
If the operator $Id-T_1^{-1} K$ is invertible, then the problem \eqref{ODE:transport T} is uniquely solvable. This implies that 
there exists a unique solution of the problem \eqref{ODE:transport T} of the form
\begin{align}\label{ODE solution}
u= T^{-1} f= (Id - T_1^{-1}K)^{-1} T_1^{-1} f,
\end{align} 
and, moreover, it can also be written as
$$
u=T_1^{-1}(Id- KT_1^{-1})^{-1} f.
$$
Therefore, the study of the forward problem here is reduced to showing that the operator $Id-KT_1^{-1} $ is invertible. 
For this purpose, we will study $KT_1^{-1}K$ instead since $KT_1^{-1}$ is not compact on $L^2(S^\tau\Omega)$ due to insufficient integrals. This is motivated by the following observation. 
The invertibility of 
$$
    Id-(K T_1^{-1})^2=(Id- KT_1^{-1})(Id+ KT_1^{-1})
$$
implies the invertibility of $Id- K T_1^{-1} $, see \cite{SU2008} or Section~\ref{sec:well-posedness} for detailed discussion.

The compactness of the operator $KT_1^{-1}K$ on $L^2(S^\tau \Omega)$ is established in Proposition~\ref{prop:compact 2}, which is built on the following two lemmas.

	\begin{lemma}\label{lemma:A1A2}
		Suppose that $(\sigma, k)$ satisfy \eqref{condition sigma k}. For any $f\in L^2(S^\tau\Omega)$, the operator $KT_1^{-1}$ is decomposed into 
		$$KT_1^{-1}f=\widehat{K}_s f+\widehat K_rf,$$ where $\widehat{K}_s$ is a singular integral operator defined in \eqref{DEF:A1} and $\widehat{K}_r$ is a smooth integral operator defined in \eqref{DEF:A2}.
		
	\end{lemma}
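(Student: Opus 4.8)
The plan is to substitute the explicit formulas for $K$ in \eqref{scattering operator} and for $T_1^{-1}$ into the composition $KT_1^{-1}$, and then convert the iterated integral over the fiber variable $\theta'$ and the arc-length parameter $s$ into a single spatial integral over $\Omega$. This will exhibit $KT_1^{-1}$ as an integral operator whose kernel is smooth away from the diagonal and carries a controlled singularity on $\{y=x\}$. Concretely, for $f\in L^2(S^\tau\Omega)$,
$$
KT_1^{-1}f(x,\theta)=\int_{S_x^\tau\Omega} k(x,\theta,\theta')\int_{-\infty}^0 W(s,x,\theta')\, f\bigl(\gamma_{x,\theta'}(s),\dot\gamma_{x,\theta'}(s)\bigr)\,ds\,d\theta'.
$$
For each fixed $x$, I would perform the change of variables $(\theta',s)\mapsto y=\gamma_{x,\theta'}(s)$, under which the velocity $\omega=\dot\gamma_{x,\theta'}(s)\in S_y^\tau\Omega$ becomes a function $\omega(x,y)$ of the endpoints determined by the relevant branch of the backward flow. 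Since $(\theta',s)$ ranges over an $(n-1)$-dimensional sphere times an interval and $y$ ranges over the $n$-dimensional set $\Omega$, this is dimensionally a map between spaces of equal dimension; the strict convexity of $\p\Omega$ with respect to $F$ and the non-trapping hypothesis guarantee that the backward trajectories issuing from $x$ foliate $\Omega$ and exit in finite time, so the map is well defined and, for small $|s|$, a diffeomorphism onto a neighborhood of $x$.

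The next step is to compute the Jacobian of this substitution. Using the Taylor expansion of the flow \eqref{ODE} near $s=0$, namely $\gamma_{x,\theta'}(s)=x+s\theta'+O(s^2)$ and $\dot\gamma_{x,\theta'}(s)=\theta'+O(s)$, together with $|\theta'|=p(x)=\sqrt{2(\tau-\varphi(x))}$ bounded below by the assumption $\max_{\overline\Omega}\varphi<\tau$, one finds $|y-x|=|s|\,p(x)\bigl(1+O(s)\bigr)$ and that the Jacobian $J(x;\theta',s)=\bigl|\det\partial_{(\theta',s)}y\bigr|$ factors as $|s|^{n-1}$ times a smooth nonvanishing factor. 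Hence $ds\,d\theta'=J^{-1}\,dy$ with $J^{-1}$ of size $|x-y|^{-(n-1)}$, which yields
$$
KT_1^{-1}f(x,\theta)=\int_{\Omega}\mathcal K(x,\theta,y)\,f\bigl(y,\omega(x,y)\bigr)\,dy,
$$
where $\mathcal K$ collects the scattering kernel $k$, the attenuation factor $W$, and the Jacobian, and is smooth off the diagonal with a weak singularity of order $n-1$ on $\{y=x\}$.

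To obtain the stated decomposition I would then localize near the diagonal. Choosing a smooth cutoff $\chi(x,y)$ equal to $1$ in a neighborhood of $\{y=x\}$ (small enough that the short-time diffeomorphism and the Taylor expansion above are valid) and supported where the flow map is injective, I define $\widehat K_s$ to be the operator with kernel $\chi\,\mathcal K$, matching \eqref{DEF:A1}, and $\widehat K_r$ the operator with kernel $(1-\chi)\,\mathcal K$, matching \eqref{DEF:A2}. On the support of $1-\chi$ the distance $|x-y|$ is bounded below, so $\mathcal K$, the velocity assignment $\omega(x,y)$, and the travel-time functions $\ell_\pm$ are all smooth there; consequently $\widehat K_r$ is a smooth integral operator, while $\widehat K_s$ retains the integrable $|x-y|^{-(n-1)}$ singularity, i.e. it is (weakly) singular. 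The convexity and non-trapping assumptions are what ensure this off-diagonal smoothness globally, by confining any multiplicities or conjugate points of the curved flow away from the short-time region where the singular part lives.

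The \textbf{main obstacle} I expect is the Jacobian analysis for the non-straight trajectories: for the Euclidean case $F\equiv0$ the map is $y=x+s\theta'$ and the degeneracy $J\sim|s|^{n-1}$ is transparent, but here one must verify from \eqref{Newton}–\eqref{ODE} that the curvature corrections do not alter the leading order $n-1$ of the diagonal singularity and that the leading Jacobian term is genuinely nonvanishing, as well as establish the smoothness and single-valuedness of $\omega(x,y)$ and $\mathcal K$ on the off-diagonal region. Once the nondegeneracy of the leading Jacobian and the global injectivity of the short-time exponential map are secured via the convexity and non-trapping hypotheses, the splitting by $\chi$ and the regularity of the remainder follow in a routine manner.
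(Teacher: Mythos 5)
Your overall strategy --- insert the formulas for $K$ and $T_1^{-1}$, pass from the variables $(s,\theta')$ to the spatial variable $y$, and split by a cutoff into a weakly singular near-diagonal piece and a smooth far piece --- is the same as the paper's. But the step you yourself flag as the ``main obstacle'' is a genuine gap, and it is exactly where the paper does its real work. Near the diagonal you propose to write $ds\,d\theta' = J^{-1}\,dy$ with $J\sim|s|^{n-1}\times(\text{nonvanishing})$; however, the map $(s,\theta')\mapsto y=\gamma_{x,\theta'}(s)$ is degenerate at $s=0$ (its Jacobian vanishes there to order $n-1$), so its inverse is not smooth at $y=x$, and the asserted factorization of the Jacobian --- together with the boundedness of the kernel left over after extracting $|x-y|^{1-n}$ --- is precisely what must be proved, not assumed. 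Convexity of $\p\Omega$ and non-trapping do not supply this: they control exit times and global behavior of the trajectories, not the local degeneracy of the flow map at $s=0$.

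The paper fills this gap with a two-step factorization that your argument lacks. First define $m(s,pv;x)=(\gamma_{x,pv}(s)-x)/s$, which extends smoothly across $s=0$ with $m(0,pv;x)=pv\neq 0$ (here $|pv|=p(x)>0$ because $\max_{\overline\Omega}\varphi<\tau$). Then introduce the signed polar coordinates $r=s|m(s,pv;x)|$, $w=m(s,pv;x)/|m(s,pv;x)|$; the Jacobian $J$ of $(s,v)\mapsto(r,w)$ satisfies $J|_{s=0}=p(x)\neq 0$, so this change of variables is a genuine local diffeomorphism for $|s|$ small. This is also why the paper's cutoff $\chi$ is taken in the time variable $s$ with small support, rather than in $|x-y|$ as in your proposal: the $(r,w)$ coordinates only exist near $s=0$. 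After this \emph{nondegenerate} substitution, the singular factor $|x-y|^{1-n}$ in \eqref{DEF:A1} arises solely from the classical polar map $y=x+rw$ (with $dy=|r|^{n-1}\,dr\,dw$), and the residual kernel $k_1$ in \eqref{DEF:k1} is manifestly bounded since $J^{-1}$ is bounded near $s=0$. For the far-from-diagonal piece, the paper uses the direct change of variables $(s,\theta')\mapsto y$ exactly as you do, since away from $s=0$ its Jacobian $\tilde J$ does not vanish, giving the smooth operator \eqref{DEF:A2}. In short, your outline matches the paper's architecture, but its central analytic claim is asserted rather than established; it becomes a proof once you supply this (or an equivalent) device for resolving the $s=0$ degeneracy.
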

	\begin{proof}
		Let $\chi: \R \rightarrow [0,1]$ be a smooth cutoff function satisfying $\chi=1$ near $s=0$ and its compact support $\text{supp}(\chi)$ is sufficiently small. We split the operator $KT^{-1}_1$ into the following two operators:
		$$
		[KT_1^{-1}] f =: \widehat{K}_sf  + \widehat{K}_rf,
		$$
     	where we denote
	 	$$
     	\widehat{K}_sf := 	[KT_1^{-1}] (\chi f) \qquad \hbox{ and }\qquad \widehat{K}_rf := 	[KT_1^{-1}] ((1-\chi) f ).
	 	$$
	    Performing the change of variables $\theta'=p(x)v$ with $p(x)>0$ for $v\in\mathbb{S}^{n-1}$ gives that 
		\begin{align*}
		[K  T_1^{-1}] f (x,\theta) &= \int_{S_x^\tau\Omega} k(x,\theta,\theta')  \int_{-\infty}^0 W(s,x,\theta') f(\gamma_{x, \theta'}(s),\dot\gamma_{x, \theta'}(s)) \,ds d\theta'\\
		&=  \int_{\mathbb{S}^{n-1}} k(x,\theta, p(x)v)  \int_{-\infty}^0 W(s,x,p(x)v) f(\gamma_{x, p(x)v}(s),\dot\gamma_{x, p(x)v}(s)) \, p(x)^{n-1}ds  dv.
		\end{align*}

		For any $s_0\neq 0$, the map $(s, \theta')\mapsto  y = \gamma_{x, \theta'}(s)$ is a diffeomorphism from a neighborhood of $(s_0,\theta_0)$ to its image. 
		However, when $s_0=0$, say ($s\in (-\varepsilon,0]$ for small $\varepsilon>0$), the map $(s,\theta')\mapsto  y = \gamma_{x, \theta'}(s)$ has the Jacobian $\tilde J$ vanishing at $s=0$. This implies that after performing the change of variables $(s,\theta')\mapsto y$, the operator $\widehat{K}_r$ has a smooth kernel while $\widehat{K}_s$ is the operator with a singular kernel.
		More precisely, if we perform the change of variables $y=\gamma_{x, pv}(s)$ in $\widehat{K}_rf$, then we obtain the smooth operator
		\begin{align}\label{DEF:A2}
		&\widehat{K}_rf(x,\theta) \notag\\= \,&\int_{\Omega}  k(x,\theta, p v)  (1- \chi(s)) W(s,x,pv) f(y,\dot\gamma_{x, p(x)v}(s))  \tilde{J}^{-1}(x,s,v)|_{s=s(y),v=v(y)}\,  p(x)^{n-1}  dy.
		\end{align}

		To deal with the singular kernel in $\widehat{K}_s$, instead of taking the change of variable as above, we take the following procedures.
		We first define the function
		$$
		m(s,p v; x) := { \gamma_{x, pv}(s) - x \over s}. 
		$$
		Then $m$ is smooth for $|s|<\varepsilon$ and we obtain
		$$
		\gamma_{x,pv}(s) - x = s m(s,pv;x),\qquad m(0,pv; x) = pv ,
		$$
		where we used the Taylor's polynomial of $\gamma_{x,pv}$ at $s=0$ and 
		$ d_s|_{s=0}m(s,p v; x) =\dot\gamma_{x,p v}(0) = pv$. 
		
		Next, we introduce the new variables $(r,w)\in \R\times \mathbb{S}^{n-1}$ defined by 
		$$
		r = s|m(s,p v; w)|,\quad w = {m(s,p v; x) \over |m(s,p v;x)|}\qquad \hbox{ for } s\leq 0.
		$$
		These new variables can be viewed as polar coordinates for $\gamma_{x, pv}(s) - x = rw$ in which we allow $r$ to be negative. Moreover, $(r,w)$ are smooth for $\varepsilon$ small enough. 
		For the change of variables $(s,v)\mapsto (r,w)$, we denote its Jacobian by
		$$
		J(x,s,v) := \text{det}{\p (r,w)\over \p(s,v)} \neq 0. 
		$$ 
		To analyze the behavior of $J$ near $s=0$, a direct computation gives
		$$
		    \p_s r |_{s=0} = |m(0,pv;x)|=p(x),\qquad \p_v r |_{s=0}=0.
		$$
		Moreover, since $m(0,p v;x)= \dot\gamma_{x, p v}(0) = pv$ for $v\in \mathbb{S}^{n-1}$ gives 
		$$w(0,pv;s) = {m(0,p v; x) \over |m(0,p v;x)|} = v,$$
		from Taylor's polynomial, we have
		$$
		\p_v w |_{s=0} = \p_v(v+ \p_sw(0,pv;x) s +\ldots) |_{s=0} =I_n,
		$$
		the $n\times n$ identity matrix.
	    This implies that the Jacobian satisfies
		$$
		J|_{s=0}= p(x)\neq 0, 
		$$ 
		and therefore the map $(s,v)\in \R^-\times\mathbb{S}^{n-1}  \mapsto (r,w) \in \R^- \times\mathbb{S}^{n-1}$ is a local diffeomorphism from $(-\varepsilon,0] \times\mathbb{S}^{n-1}$ to its image provided that $\varepsilon>0$ is sufficiently small. Here we denote $\R^-$ to be the set consisting of all negative points and the zero.
		
	    Note that we can always choose the support of $\chi$ to be sufficiently small. Combining this with the fact that $J$ does not vanish near $s=0$,
		we have
		\begin{align*}
	    \widehat{K}_sf (x,\theta)  
		&=  \int_{\mathbb{S}^{n-1}}  k(x,\theta, p v)  \int_{-\infty}^0 \chi(s)  W(s,x,pv) J^{-1}(x,s,v)  \\
		&\hskip4cm f(x+rw,\dot\gamma_{x, p(x)v}(s)) |_{s=s(x,r,w),v=v(x,r,w)} \, p(x)^{n-1} \, drdw.
		\end{align*}
		Finally, the change of variables $y= x+rw$ in $\widehat{K}_sf$ yields
		\begin{align}\label{DEF:A1}
			\widehat{K}_sf(x,\theta) := \int_{\Omega} {k_1(x,y,\theta) \over {|x-y|^{n-1}}} f(y,\dot\gamma_{x, p(x)v}(s))|_{s=s(x,|y-x| ,{y-x\over |y-x|}),v=v(x,|y-x| ,{y-x\over |y-x|})} \, p(x)^{n-1} dy,
		\end{align}
		where
		\begin{align}\label{DEF:k1}
		k_1 (x,y,\theta) :=  k(x,\theta,pv) \chi(s) W(s,x,pv) J^{-1}(x,s,v)|_{s=s(x,|y-x| ,{y-x\over |y-x|}),v=v(x,|y-x| ,{y-x\over |y-x|})} .
		\end{align}
	\end{proof}
	
	\begin{remark}
	Since $J|_{s=0}=p(x)\neq 0$, 
	$J^{-1}$ is bounded from above and below by some positive constants near $s=0$. Therefore, we have
	$|k_1|\leq C$ for some constant $C>0$.
	\end{remark}
	We now introduce the operator $\Pi$ lifting a function from $\Omega$ to $S^\tau\Omega$, that is, 
	$$
	   [\Pi f](x,\theta) :=  f(x).
	$$

	Then we have the following compact operator.
	\begin{lemma}\label{lemma:compact 1}
	The operator $KT_1^{-1}\Pi : L^2(\Omega) \rightarrow L^2(S^\tau\Omega)$ is compact.
	\end{lemma}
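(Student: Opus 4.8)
The operator $KT_1^{-1}\Pi : L^2(\Omega) \rightarrow L^2(S^\tau\Omega)$ is compact.

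Let me understand the setup.The plan is to exploit the decomposition $KT_1^{-1} = \widehat{K}_s + \widehat{K}_r$ from Lemma~\ref{lemma:A1A2} together with the fact that $\Pi g$ is independent of the velocity variable, which collapses the complicated velocity dependence in the integrands. Since $[\Pi g](y,\theta')=g(y)$, the regular part $\widehat{K}_r\Pi$ becomes an integral operator whose kernel is obtained from \eqref{DEF:A2} by discarding the velocity slot; this kernel is bounded and continuous on the compact set $S^\tau\overline\Omega\times\overline\Omega$, so $\widehat{K}_r\Pi$ is Hilbert--Schmidt and hence compact. The entire difficulty therefore concentrates on the singular part.

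Applying $\widehat{K}_s$ to $\Pi g$ in \eqref{DEF:A1} and dropping the now-irrelevant velocity argument $\dot\gamma_{x,p(x)v}(s)$, I obtain
\begin{equation*}
[\widehat{K}_s\Pi g](x,\theta)=p(x)^{n-1}\int_\Omega \frac{k_1(x,y,\theta)}{|x-y|^{n-1}}\,g(y)\,dy.
\end{equation*}
By the Remark following Lemma~\ref{lemma:A1A2}, $k_1$ is bounded, and $p$ is bounded on $\overline\Omega$, so the kernel $B(x,\theta,y):=p(x)^{n-1}k_1(x,y,\theta)|x-y|^{-(n-1)}$ satisfies $|B(x,\theta,y)|\leq C|x-y|^{-(n-1)}$. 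This is a weakly singular kernel, of Riesz-potential type, which heuristically gains one spatial derivative; so one expects compactness into $L^2(S^\tau\Omega)$, but this must be established rigorously.

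The main obstacle is that $B$ fails to be square-integrable: the squared singularity $|x-y|^{-2(n-1)}$ is \emph{not} integrable for $n\geq 2$, so $\widehat{K}_s\Pi$ is not Hilbert--Schmidt and compactness cannot be read off directly. To circumvent this I would argue by truncation. Set $B_\varepsilon:=B\,\mathbf 1_{\{|x-y|>\varepsilon\}}$ and let $\mathcal B_\varepsilon$ denote the associated operator; each $\mathcal B_\varepsilon$ has a bounded kernel supported away from the diagonal, hence is Hilbert--Schmidt and compact. To pass to the limit I would apply Schur's test to the difference kernel $B\,\mathbf 1_{\{|x-y|\leq\varepsilon\}}$: the row integral $\int_{\{|x-y|\leq\varepsilon\}}|x-y|^{-(n-1)}\,dy$ is bounded by $C\varepsilon$ since $|x-y|^{-(n-1)}$ is integrable near the diagonal, and the column integral over $S^\tau\Omega$ is likewise bounded by $C\varepsilon$ after integrating out the compact fiber $S_x^\tau\Omega$. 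Schur's test then yields $\|\widehat{K}_s\Pi-\mathcal B_\varepsilon\|\leq C\varepsilon\to 0$, exhibiting $\widehat{K}_s\Pi$ as a norm limit of compact operators, hence compact. Combining the two contributions gives the compactness of $KT_1^{-1}\Pi$.
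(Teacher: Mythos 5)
Your proposal is correct and follows essentially the same route as the paper: decompose via Lemma~\ref{lemma:A1A2}, note that $\Pi$ removes the velocity dependence so the singular part has kernel $p(x)^{n-1}k_1(x,y,\theta)|x-y|^{-(n-1)}$, and then obtain compactness by truncating the kernel away from the diagonal and showing the truncation error tends to zero in operator norm. The paper delegates this last step to Proposition~\ref{Prop:compact operator 1} in the appendix, whose proof (truncation at $|x-y|\geq 1/m$, compactness of the truncated operators, and a Cauchy--Schwarz estimate of the remainder) is precisely your Hilbert--Schmidt-plus-Schur-test argument.
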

	\begin{proof}
		We restrict $f$ on the $x$ variable only and then Lemma~\ref{lemma:A1A2} leads to 	
		\begin{align*}
		[KT_1^{-1}\Pi]f (x,\theta) =: \widehat{K}_s \Pi f(x,\theta) + \widehat{K}_r\Pi f (x,\theta),
		\end{align*}
		where $\widehat K_r\Pi$ is a smooth operator and 
		\begin{align*}
		\widehat{K}_s\Pi f(x,\theta) := \int_{\Omega} {k_1(x,y,\theta) \over {|x-y|^{n-1}}} f(y) \, p(x)^{n-1} dy,
		\end{align*}
		is a singular integral operator with $k_1$ defined in \eqref{DEF:k1}.
        By the assumptions on $k, \sigma$, the function $|k_1(x,y,\theta)p(x)^{n-1}|$ is bounded from above by some positive constant. Following a similar argument as in the proof of Proposition~\ref{Prop:compact operator 1} implies that $\widehat{K}_s\Pi$ is indeed a compact operator by exhibiting it as a norm limit of compact operators with finite ranks. Therefore, together with the smoothness of the operator $\widehat{K}_r\Pi$, we conclude that $KT_1^{-1}\Pi$ is compact on $L^2(\Omega)$.	 
	\end{proof}

To close this subsection, we show the following result
with the help of Lemma~\ref{lemma:A1A2} and Lemma~\ref{lemma:compact 1}. 
\begin{proposition}\label{prop:compact 2}
	The operator $KT_1^{-1}K : L^2(S^\tau\Omega) \rightarrow L^2(S^\tau\Omega)$ is compact.
\end{proposition}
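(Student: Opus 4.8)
The plan is to reduce everything to the decomposition furnished by Lemma~\ref{lemma:A1A2}. Writing $KT_1^{-1}K=(KT_1^{-1})K=\widehat{K}_sK+\widehat{K}_rK$, it suffices to show that each of $\widehat{K}_sK$ and $\widehat{K}_rK$ is compact on $L^2(S^\tau\Omega)$. The guiding observation is that $KT_1^{-1}$ itself fails to be compact only because both $\widehat{K}_s$ and $\widehat{K}_r$ evaluate their argument at the single pinned velocity $\dot\gamma_{x,p(x)v}(s)$, so that they see the input merely on a graph over $\Omega$; precomposing with the inner operator $K$, which integrates over the entire fiber $S_y^\tau\Omega$, cures this defect and turns both $\widehat{K}_sK$ and $\widehat{K}_rK$ into genuine integral operators over all of $S^\tau\Omega$ in the variables $(y,\theta'')$.

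For the smooth part I would substitute \eqref{scattering operator} into \eqref{DEF:A2}, replacing the pinned value $f(y,\dot\gamma_{x,p(x)v}(s))$ by $\int_{S_y^\tau\Omega}k(y,\dot\gamma_{x,p(x)v}(s),\theta'')f(y,\theta'')\,d\theta''$. This exhibits $\widehat{K}_rK$ as an integral operator whose Schwartz kernel in $((x,\theta),(y,\theta''))$ is the product of the smooth factor from \eqref{DEF:A2} --- bounded because the cutoff $1-\chi$ keeps $s$ away from $0$, where $\tilde J^{-1}$ would blow up --- with the bounded factor $k(y,\dot\gamma_{x,p(x)v}(s),\theta'')$. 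Since this kernel is bounded and $S^\tau\Omega$ has finite measure, it lies in $L^2(S^\tau\Omega\times S^\tau\Omega)$, so $\widehat{K}_rK$ is Hilbert--Schmidt, hence compact.

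For the singular part, the same substitution turns $\widehat{K}_sK$, via \eqref{DEF:A1}, into an integral operator on $L^2(S^\tau\Omega)$ with kernel
\[
\mathcal{K}(x,\theta;y,\theta'')=\frac{k_1(x,y,\theta)}{|x-y|^{n-1}}\,k\big(y,\dot\gamma_{x,p(x)v}(s),\theta''\big)\Big|_{s=s(x,y),\,v=v(x,y)}\,p(x)^{n-1}.
\]
By the Remark after Lemma~\ref{lemma:A1A2} we have $|k_1|\le C$, and $k$ is bounded, so $|\mathcal{K}(x,\theta;y,\theta'')|\le C|x-y|^{-(n-1)}$; the additional integration over the compact fiber $S_y^\tau\Omega$ does not raise the order of the singularity. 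This is exactly the weakly singular structure already treated for $\widehat{K}_s\Pi$ in Lemma~\ref{lemma:compact 1}. Since $|x-y|^{-(n-1)}$ is integrable over the bounded set $\Omega$, I would approximate $\widehat{K}_sK$ in operator norm by excising a shrinking ball $\{|x-y|<\varepsilon\}$ about the diagonal: the excised operators have bounded kernels on the finite-measure set $S^\tau\Omega$, hence are Hilbert--Schmidt and compact, while the operator norm of the near-diagonal remainder is controlled by the Schur test through $\sup_x\int_{|x-y|<\varepsilon}|x-y|^{-(n-1)}\,dy=O(\varepsilon)\to0$. Thus $\widehat{K}_sK$ is a norm limit of compact operators and is compact, following the argument of Proposition~\ref{Prop:compact operator 1}; combining the two parts gives the claim.

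The main obstacle is the singular part $\widehat{K}_sK$: one must confirm that restoring the fiber integration through $K$ preserves the integrable singularity order $n-1<n$ and that the bounded factor $k(y,\dot\gamma_{x,p(x)v}(s),\theta'')$ remains measurable after $s$ and $v$ are pinned to $(x,y)$, which uses the $C^2$ regularity of $k$ together with the smoothness of the flow $\gamma_{x,\theta}$ and the non-vanishing Jacobian $J|_{s=0}=p(x)$. Granting this, the operator-norm convergence of the approximations rests on the same Schur-type estimate that already underlies Lemma~\ref{lemma:compact 1}, so no genuinely new analytic difficulty arises beyond bookkeeping the extra velocity variable.
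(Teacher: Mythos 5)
Your proposal is correct and follows essentially the same route as the paper: decompose via Lemma~\ref{lemma:A1A2}, substitute the fiber integration of $K$ into \eqref{DEF:A1} to produce a weakly singular kernel bounded by $C|x-y|^{-(n-1)}$, and conclude compactness by the truncation-near-the-diagonal argument of Proposition~\ref{Prop:compact operator 1}, with the smooth part $\widehat{K}_rK$ handled as a bounded-kernel (Hilbert--Schmidt) operator. The only difference is cosmetic: you spell out the Schur-type estimate and the Hilbert--Schmidt argument that the paper delegates to its appendix propositions.
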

\begin{proof}
Since $\widehat{K}_r K$ is a smooth operator, it is sufficient to show that $\widehat{K}_s K$ is compact. To achieve this, replacing $f(y,\dot\gamma_{x, p(x)v}(s))$ in $\widehat{K}_sf$ in \eqref{DEF:A1} by 
\begin{align*}
&K(f)(y,\dot\gamma_{x, p(x)v}(s))|_{s=s(x,|y-x| ,{y-x\over |y-x|}),v=v(x,|y-x| ,{y-x\over |y-x|})}\\
=\,& \int_{S_y^\tau\Omega} k(y,\dot\gamma_{x, p(x)v}(s),\theta') |_{s=s(x,|y-x| ,{y-x\over |y-x|}),v=v(x,|y-x| ,{y-x\over |y-x|})} f(y, \theta')\,d\theta'\\
=\,& \int_{\mathbb{S}^{n-1}}  k(y,\dot\gamma_{x, p(x)v}(s),p(y)v')|_{s=s(x,|y-x| ,{y-x\over |y-x|}),v=v(x,|y-x| ,{y-x\over |y-x|})} f(y, p(y)v')\,p(y)^{n-1}\, dv',
\end{align*}
we have
\begin{align*}
   [\widehat{K}_sK f]  (x,\theta)  
  = \int_\Omega  \int_{\mathbb{S}^{n-1}}   { g(x,y,\theta,v')  \over |x-y|^{n-1} } f(y,p(y)v') p(y)^{n-1}  \,  dv'dy.
\end{align*}
Here we denote
$$
g(x,y,\theta,v') :=k_1(x,y,\theta)   k(y,\dot\gamma_{x, p(x)v}(s),p(y)v') |_{s=s(x,|y-x| ,{y-x\over |y-x|}),v=v(x,|y-x| ,{y-x\over |y-x|})}  p(x)^{n-1}  ,
$$
which satisfies  
$$
   |g(x,y,\theta,v') |\leq C,
$$
for some constant $C>0$. 
By Proposition~\ref{Prop:compact operator 1}, we conclude that $\widehat{K}_sK$ is compact. This completes the proof.
\end{proof}

\subsection{The well-posedness result}\label{sec:well-posedness}
Before showing the well-posedness result, we also need the following analytic Fredholm theorem from \cite{RS80}. Let $\mathscr L(\mathscr H)$ denote the set of bounded linear operators on the Hilbert space $\mathscr H$.

\begin{proposition}\cite[Theorem VI.14]{RS80}\label{analytic Fredholm}
Let $D$ be an open connected subset of $\mathbb C$. Let $\mathcal{F}: D\to \mathscr L(\mathscr{H})$ be an analytic operator-valued function such that $\mathcal{F}(\lambda)$ is compact for each $\lambda\in D$. Then one of the following statements holds: 
either 
\begin{enumerate}
		\item[(1)] $(Id-\mathcal{F}(\lambda))^{-1}$ exists for no $\lambda\in D$; or else 
\item[(2)] $(Id-\mathcal{F}(\lambda))^{-1}$ exists for all $\lambda\in D\setminus S$ where $S$ is a discrete subset of $D$. In this case, $(Id-\mathcal{F}(\lambda))^{-1}$ is meromorphic in $D$, analytic in $D\setminus S$, the residues at the poles are finite rank operators.
\end{enumerate}
\end{proposition}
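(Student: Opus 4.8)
The plan is to establish the dichotomy locally near each point of $D$ and then globalize using connectedness; this is the classical argument for the analytic Fredholm theorem. Fix $\lambda_0\in D$. Since $\mathcal{F}(\lambda_0)$ is compact on the Hilbert space $\mathscr{H}$, it is a norm limit of finite rank operators, so I can choose a finite rank operator $F$ with $\|\mathcal{F}(\lambda_0)-F\|<1/2$. Analyticity gives continuity of $\lambda\mapsto\mathcal{F}(\lambda)$, so there is a disk $D_0=\{|\lambda-\lambda_0|<r\}\subset D$ on which $\|\mathcal{F}(\lambda)-F\|<1/2$. On $D_0$ the operator $A(\lambda):=Id-(\mathcal{F}(\lambda)-F)$ is invertible by the Neumann series, and $G(\lambda):=A(\lambda)^{-1}$ is analytic in $\lambda$. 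Factoring $Id-\mathcal{F}(\lambda)=A(\lambda)-F=A(\lambda)\big(Id-G(\lambda)F\big)$ reduces, since $A(\lambda)$ is always invertible, the invertibility of $Id-\mathcal{F}(\lambda)$ to that of $Id-G(\lambda)F$, where $G(\lambda)F$ is finite rank and analytic.

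Next I would reduce the finite rank problem to a scalar determinant. Writing $F=\sum_{i=1}^N\langle\cdot,\phi_i\rangle\psi_i$, the operator $G(\lambda)F=\sum_i\langle\cdot,\phi_i\rangle G(\lambda)\psi_i$ has range in the span of the $G(\lambda)\psi_i$, and $Id-G(\lambda)F$ is invertible on $\mathscr{H}$ if and only if the $N\times N$ matrix $M(\lambda)$ with entries $\delta_{ij}-\langle G(\lambda)\psi_j,\phi_i\rangle$ is invertible. Set $d(\lambda):=\det M(\lambda)$; since the matrix entries are analytic, $d$ is analytic on $D_0$. By the identity theorem, either $d\equiv 0$ on $D_0$, in which case $Id-\mathcal{F}(\lambda)$ is invertible for no $\lambda\in D_0$, or the zero set $S_0:=\{d=0\}$ is discrete in $D_0$ and $Id-\mathcal{F}(\lambda)$ is invertible precisely on $D_0\setminus S_0$. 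When invertible, Cramer's rule expresses $(Id-G(\lambda)F)^{-1}$, hence $(Id-\mathcal{F}(\lambda))^{-1}=(Id-G(\lambda)F)^{-1}G(\lambda)$, through the cofactor matrix divided by $d(\lambda)$; this shows the inverse is meromorphic on $D_0$ with poles contained in $S_0$, and since the singular part is built from the finite rank pieces, the residues are finite rank.

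Finally I would globalize. Let $D_1\subset D$ be the set of points admitting a neighborhood on which $Id-\mathcal{F}$ is nowhere invertible (the first local alternative), and put $D_2:=D\setminus D_1$. The set $D_1$ is open by definition. For $D_2$: if $\lambda_0\in D_2$, then the local disk $D_0$ must fall under the second alternative (otherwise $\lambda_0\in D_1$), so every point of $D_0$ again has a neighborhood on which invertibility fails only on a discrete set, giving $D_0\subset D_2$; hence $D_2$ is open as well. As $D$ is connected and $D=D_1\sqcup D_2$ with both sets open, either $D_1=D$, which yields alternative (1) of the statement, or $D_2=D$. In the latter case the non-invertibility set $S$ is closed (the invertible operators form an open set and $\lambda\mapsto Id-\mathcal{F}(\lambda)$ is continuous) and discrete in a neighborhood of each of its points, hence discrete in $D$, and the local meromorphic inverses with finite rank residues patch to a global one, giving alternative (2).

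The \emph{main obstacle} is the reduction from the operator equation on the infinite dimensional space $\mathscr{H}$ to the scalar analytic determinant $d(\lambda)$: the finite rank approximation must be arranged uniformly on a disk, which requires only continuity and is supplied by analyticity, and the passage through Cramer's rule is what simultaneously delivers the meromorphy of the inverse and the finite rank nature of its residues. Once this local structure is in place, the globalization is a soft clopen argument relying on the connectedness of $D$.
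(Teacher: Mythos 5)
This proposition is not proved in the paper at all --- it is imported verbatim from Reed--Simon \cite[Theorem VI.14]{RS80} --- and your argument is precisely the classical proof given in that reference: uniform finite-rank approximation on a disk, the factorization $Id-\mathcal{F}(\lambda)=A(\lambda)(Id-G(\lambda)F)$, reduction to the analytic determinant $d(\lambda)$ with Cramer's rule supplying both meromorphy and finite-rank residues, and a clopen/connectedness argument to globalize the dichotomy. Your proof is correct and matches the cited source's approach, so there is nothing to flag.
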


We are now ready to show that the problem under study is well-posed. Note that the following well-posedness result holds for $f=f(x,\theta)$. 

\begin{theorem}[Well-posedness Result]\label{THM:well-posedness} 
	There exists an open and dense set $\mathscr U$ of pairs $(\sigma,k)\in C(S^\tau \overline\Omega) \times  C(S^\tau\overline\Omega^2)$, including a neighborhood of $(0,0)$ so that for each $(\sigma,k)$ in the set, the following statements hold:
	\begin{enumerate}
		\item[(1)] for any $f\in L^2(S^\tau\Omega)$, there exists a unique solution $u\in L^2(S^\tau\Omega)$ to the problem \eqref{EQN:transport};
		\item[(2)] $\mathcal{A}: L^2(S^\tau\Omega)\rightarrow L^2(\p_+S^\tau\Omega,d\xi)$ is a bounded operator.
	\end{enumerate}
\end{theorem}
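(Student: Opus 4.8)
The plan is to reduce the well-posedness of \eqref{EQN:transport} to the invertibility of the operator $Id-KT_1^{-1}$ on $L^2(S^\tau\Omega)$, exactly as indicated around \eqref{ODE solution}, and then to extract the generic set $\mathscr U$ from the analytic Fredholm theorem (Proposition~\ref{analytic Fredholm}). First I would record that $T_1^{-1}$ is bounded on $L^2(S^\tau\Omega)$: in its explicit formula the $s$-integral is effectively over the finite interval $[\ell_-(x,\theta),0]$ (the non-trapping hypothesis makes $\ell_-$ finite and $f$ is extended by $0$ outside $\Omega$), while the weight $W(s,x,\theta)$ is bounded since $\sigma\in C(S^\tau\overline\Omega)$; a Cauchy--Schwarz estimate along the trajectories together with Santalo's formula then gives $\|T_1^{-1}g\|_{L^2}\le C\|g\|_{L^2}$. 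Since $K$ is bounded on $L^2(S^\tau\Omega)$, so is $KT_1^{-1}$. From the factorization
$$Id-(KT_1^{-1})^2=(Id-KT_1^{-1})(Id+KT_1^{-1})=(Id+KT_1^{-1})(Id-KT_1^{-1}),$$
invertibility of $Id-(KT_1^{-1})^2$ yields invertibility of $Id-KT_1^{-1}$, and writing $(KT_1^{-1})^2=(KT_1^{-1}K)\,T_1^{-1}$ shows, by Proposition~\ref{prop:compact 2} and boundedness of $T_1^{-1}$, that $(KT_1^{-1})^2$ is compact. For merely continuous $(\sigma,k)$ I would first approximate in $C^0$ by $C^2$ coefficients, for which Proposition~\ref{prop:compact 2} applies directly, and use that a norm limit of compact operators is compact.

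Define $\mathscr U$ to be the set of pairs $(\sigma,k)$ for which $Id-KT_1^{-1}$ is invertible on $L^2(S^\tau\Omega)$. Openness of $\mathscr U$ follows from the continuity of the map $(\sigma,k)\mapsto KT_1^{-1}\in\mathscr L(L^2(S^\tau\Omega))$, which is linear in $k$ and continuous in $\sigma$ through the weight $W$, together with the fact that the invertible operators form an open subset of $\mathscr L(L^2(S^\tau\Omega))$. That $\mathscr U$ contains a neighborhood of $(0,0)$ is immediate: for $\|\sigma\|_{C^0}$ and $\|k\|_{C^0}$ small one has $\|KT_1^{-1}\|<1$, so $Id-KT_1^{-1}$ is invertible by a Neumann series.

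For density, fix any $(\sigma_0,k_0)$ and, replacing $k_0$ by $\lambda k_0$ for $\lambda\in\mathbb C$, consider the entire operator-valued family $\mathcal F(\lambda):=\lambda^2(K_0T_1^{-1})^2$, where $K_0$ is the scattering operator with kernel $k_0$. Each $\mathcal F(\lambda)$ is compact, and $Id-\mathcal F(0)=Id$ is invertible, so Proposition~\ref{analytic Fredholm} rules out alternative (1) and places us in alternative (2): $(Id-\mathcal F(\lambda))^{-1}$ exists for all $\lambda$ outside a discrete set $S\subset\mathbb C$. Choosing real $\lambda\to 1$ with $\lambda\notin S$ makes $Id-\lambda^2(K_0T_1^{-1})^2$, hence $Id-\lambda K_0T_1^{-1}$, invertible, so that the pairs $(\sigma_0,\lambda k_0)\in\mathscr U$ lie arbitrarily close to $(\sigma_0,k_0)$; thus $\mathscr U$ is dense.

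For $(\sigma,k)\in\mathscr U$, statement (1) follows from \eqref{ODE solution}, which produces the unique solution $u=T_1^{-1}(Id-KT_1^{-1})^{-1}f\in L^2(S^\tau\Omega)$. For statement (2) I would set $h:=(Id-KT_1^{-1})^{-1}f$, so that on $\p_+S^\tau\Omega$
$$u(x,\theta)=\int_{\ell_-(x,\theta)}^0 W(s,x,\theta)\,h(\gamma_{x,\theta}(s),\dot\gamma_{x,\theta}(s))\,ds;$$
Cauchy--Schwarz in $s$ over the finite interval followed by Santalo's formula to integrate $|\theta\cdot n|$ over $\p_+S^\tau\Omega$ gives $\|\mathcal Af\|_{L^2(\p_+S^\tau\Omega,d\xi)}\le C\|h\|_{L^2(S^\tau\Omega)}\le C\|f\|_{L^2(S^\tau\Omega)}$, i.e. $\mathcal A$ is bounded. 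The main obstacle I anticipate is the genericity step: one must ensure the passage to the quadratic operator $(KT_1^{-1})^2$ is legitimate (compactness available at the stated regularity, and invertibility transferred through the factorization) and, above all, that the exceptional set from Proposition~\ref{analytic Fredholm} is genuinely discrete so that density holds in the $C^0$ topology; verifying openness rests on the operator-norm continuity of $(\sigma,k)\mapsto KT_1^{-1}$, which in turn uses the smooth dependence of the flow and of $W$ on the coefficients. The trace estimate in (2) is comparatively routine once Santalo's formula is available.
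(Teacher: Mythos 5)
Your proposal is correct and follows essentially the same route as the paper: reduce well-posedness to invertibility of $Id-KT_1^{-1}$, get compactness of $(\lambda KT_1^{-1})^2$ from Proposition~\ref{prop:compact 2} and boundedness of $T_1^{-1}$, obtain density via the analytic Fredholm theorem (Proposition~\ref{analytic Fredholm}) applied to $\lambda\mapsto\lambda^2(KT_1^{-1})^2$, openness by a perturbation/continuity argument, and boundedness of $\mathcal A$ via Cauchy--Schwarz together with Santalo's formula (Proposition~\ref{Santalo}). Your two refinements --- deducing invertibility of $Id-\lambda KT_1^{-1}$ directly from the commuting factorization rather than by ``analytic continuation,'' and approximating merely continuous $(\sigma,k)$ by $C^2$ pairs to bridge the gap between the theorem's $C(S^\tau\overline\Omega)\times C(S^\tau\overline\Omega^2)$ hypothesis and the $C^2$ assumption in Lemma~\ref{lemma:A1A2} --- only tighten steps the paper leaves implicit.
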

Here $d\xi(x,\theta):=|n(x)\cdot \theta|d\mu(x) d\theta$ is the measure on $\p_+S^\tau \Omega$ with $d\mu(x)$ the standard measure on $\p\Omega$.
\begin{proof}
	(1) Following a similar argument and notations as in \cite{SU2008}, we fix an arbitrary pair $(\sigma,k)$, and consider 
	$$P(\lambda)=Id-(\lambda K T_1^{-1})^2=(Id-\lambda KT_1^{-1})(Id+\lambda KT_1^{-1})\quad \mbox{in} \quad L^2(S^\tau\Omega),\quad \lambda\in\mathbb C.$$
	Notice that $(\lambda KT_1^{-1})^2=\lambda^2 KT_1^{-1}KT_1^{-1}$ is compact by Proposition~\ref{prop:compact 2} and the resolvent $P(\lambda)^{-1}$ exists for $|\lambda|\ll 1$. By taking $F(\lambda) = (\lambda K T_1^{-1})^2$ and $D=\mathbb C$, we apply Proposition~\ref{analytic Fredholm}   to get that the resolvent $P(\lambda)^{-1}$ exists for all but a discrete set $S\subset \mathbb C$ and it is meromorphic w.r.t. $\lambda$. In particular, this implies that the resolvent
	$$(Id-\lambda KT_1^{-1})^{-1}=(Id+\lambda KT_1^{-1})P(\lambda)^{-1}$$
	exists for $\lambda\in \mathbb C\setminus S$. This can be shown by analytic continuation in $\mathbb C\setminus S$, which is an open connected set. 
    Since $(Id-\lambda T_1^{-1}K)^{-1} T_1^{-1} = T_1^{-1}(Id-\lambda KT_1^{-1})^{-1}$, the operator $T^{-1}$ with $k$ replaced by $\lambda k$ in \eqref{ODE solution} exists and thus
    the solution to \eqref{EQN:transport} 
	$$
	u=(Id-\lambda T_1^{-1}K)^{-1} T_1^{-1} f=T_1^{-1}(Id-\lambda KT_1^{-1})^{-1} f
	$$ 
	exists for any $\lambda\in \mathbb{C}\setminus S$. This means that for $(\sigma,\lambda k)$ $\lambda\in \mathbb{C}\setminus S$, the solution $u$ exists, which yields that the set $\mathscr U$ is dense in $C(S^\tau\overline{\Omega})\times C(S^\tau\overline{\Omega}^2)$. 
	
	To show this set $\mathscr U$ is open, we apply the standard perturbation arguments. We consider $k$ for which $Id - \lambda KT_1^{-1}$ is invertible and $\tilde k$ is a small perturbation of $k$ ($\tilde k$ is close to $k$ in $C(S^\tau\overline{\Omega}^2)$), which leads to that the operator $\widetilde K$ with kernel $\tilde{k}$ is close to $K$. Moreover, we take $\tilde \lambda$ a small perturbation of $\lambda$. Then the invertibility of $Id-\lambda KT_1^{-1}$ gives that
	\begin{align*}
	Id-\tilde \lambda \widetilde KT_1^{-1} & = Id-\lambda KT_1^{-1}  -\lambda (\widetilde K - K)
	T_1^{-1} + (\lambda-\tilde\lambda)\widetilde K T^{-1}_1  \\
	& = (Id-\lambda KT_1^{-1}) [Id-(Id-\lambda KT_1^{-1})^{-1} (\lambda(\widetilde K-K)T^{-1}_1 - (\lambda-\tilde\lambda)\widetilde K T^{-1}_1)]\\
	&=:(Id-\lambda KT_1^{-1}) (Id- K^\flat) .
	\end{align*}
	When $\tilde k$ and $\tilde \lambda$ are sufficiently close to $k$ and $\lambda$, respectively, the norm of the operator $K^\flat$ is going to be very small, which implies that $Id-K^\flat$ is invertible.
	Thus we can conclude that $Id-\tilde \lambda \widetilde KT_1^{-1}$ is invertible when $\tilde k$ is close to $k$ and $\tilde\lambda$ is also close to $\lambda$.
	This shows that the set $\mathscr U$ is open. Hence we complete the proof of part (1).

	
	(2)
	For $f\in L^2(S^\tau\Omega)$, from (1), we have that the transport solution $u=(Id -T^{-1}_1 K)^{-1} T_1^{-1} f$ is in $L^2(S^\tau\Omega)$. Then the generalized attenuated X-ray transform is
	$$
	    \mathcal{A}f= B_+ T^{-1} f = B_+(Id -T^{-1}_1 K)^{-1} T_1^{-1} f=B_+T_1^{-1}(Id - KT^{-1}_1 )^{-1}  f,	
	$$
where we denote 
$$B_+h := h|_{\p_+S^\tau \Omega}$$
and, in particular, $(Id - KT^{-1}_1 )^{-1} f\in L^2(S^\tau\Omega)$. Therefore, it is sufficient to show that $B_+T_1^{-1} : L^2(S^\tau\Omega)\rightarrow L^2(\p_+S^\tau\Omega,d\xi)$ is also bounded. To this end, we apply H\"older inequality and Santalo's formula (see Proposition \ref{Santalo}) to obtain
\begin{align*}
&\hskip.5cm \|B_+T_1^{-1}  f\|^2_{L^2(\p_+S^\tau\Omega,d\xi)} \\
&= \int_{\p_+S^\tau\Omega} |B_+T_1^{-1}  f(x,\theta)|^2 d\xi(x,\theta) \\
&\leq C\int_{\p_+S^\tau\Omega} \LC\int^0_{\ell_-(x,\theta)} |f(\gamma_{x,\theta}(s),\dot\gamma_{x,\theta}(s))| ds\RC^2 d\xi(x,\theta)\\
&\leq C (\max_{\p_+S^\tau\Omega}|\ell_-(x,\theta)|) \int_{\p_+S^\tau\Omega}  \int^0_{\ell_-(x,\theta)} \LV f(\gamma_{x,\theta}(s),\dot\gamma_{x,\theta}(s))\RV^2  ds  d\xi(x,\theta)\\
&\leq  C (\max_{\p_+S^\tau\Omega}|\ell_-(x,\theta)|) \int_{\p_+S^\tau\Omega}    \LC  \int^0_{\ell_-(x,\theta)}   p(\gamma_{x,\theta}(s)) \LV f(\gamma_{x,\theta}(s),\dot\gamma_{x,\theta}(s))\RV^2  ds \RC p(x)^{-1}d\xi(x,\theta)\\
&= C(\max_{\p_+S^\tau\Omega}|\ell_-(x,\theta)|) \int_\Omega \int_{S_x^\tau\Omega} |f(x,\theta)|^2 \,d\theta dx \\
&= C(\max_{\p_+S^\tau\Omega}|\ell_-(x,\theta)|) \|f\|^2_{L^2(S^\tau
\Omega)}. 
\end{align*}
The proof is completed.
\end{proof}


\section{Inverse Source Problem}\label{sec:inverse problem}

In this section, we will study the recovery of the source. For this purpose, we consider a larger domain $\Omega_1\Supset \Omega$ and $\Omega_1$ is also strictly convex and bounded. Moreover, as mentioned in the introduction, we extend $\varphi$, $Y$, $\sigma$ and $k$ to $\Omega_1$ while keeping their regularities and for the source term, we also extend $f$ by zero in $\Omega_1\setminus\Omega$. Then $f$ is supported in $\overline\Omega$. 
In this larger domain $\Omega_1$, we define the corresponding measurement operator $\mathcal{A}_1$ by
$$
    \mathcal{A}_1: f\in L^2(\Omega_1)\rightarrow L^2(\p_+S^\tau\Omega_1,d\xi)
$$
in the same way as the previously defined operator $\mathcal{A}$. To recover $f$, we will take the measurement on $\p\Omega_1$ instead of $\p\Omega$. 

Note that all operators below are also defined in the same way as before, the only difference is that they are now defined in the domain $\Omega_1$. 

Suppose that for $(\sigma, k)\in\mathscr{U}$, the corresponding operator $T_1-K$ is invertible. This is promised by the well-posedness theorem. Then we can express
$$
\mathcal{A}_1f = B_+T_1^{-1}(Id - KT^{-1}_1 )^{-1} \Pi f =: I_{\sigma,F} f + L f,	
$$
where the operator $I_{\sigma,F}$ is defined in \eqref{DEF:IsigmaF}, written as
$I_{\sigma,F}|_{\p_+ S^\tau\Omega_1} = B_+(T_1^{-1} \Pi)$, and $L:L^2(\Omega_1)\rightarrow L^2(\p_+S^\tau\Omega_1,d\xi)$ is defined by
$$
    L:= B_+ (-Id + (Id-T_1^{-1}K)^{-1})T_1^{-1} \Pi.
$$
Moreover, $L$ can be recast as
$$
L = B_+T_1^{-1} (Id-KT_1^{-1})^{-1} KT_1^{-1}\Pi  
$$
and from this expression, one can see that $L$ is a bounded operator since $B_+T_1^{-1}: L^2( S^\tau\Omega_1)\rightarrow L^2(\p_+S^\tau\Omega_1,d\xi)$, $(Id-KT_1^{-1})^{-1} :L^2(S^\tau\Omega_1)\rightarrow L^2(S^\tau\Omega_1)$ and $KT_1^{-1}\Pi: L^2(\Omega_1)\rightarrow L^2(S^\tau\Omega_1)$ are bounded operators.
 
Now with $\mathcal{A}_1^*$, the adjoint of $\mathcal{A}_1$, we decompose the operator $\mathcal{A}_1^*\mathcal{A}_1$ into
\begin{align}\label{DEF:A*A}
    \mathcal{A}_1^*\mathcal{A}_1 = I_{\sigma,F}^* I_{\sigma,F} + (I^*_{\sigma,F} L + L^*I_{\sigma,F} + L^*L) =: I_{\sigma,F}^* I_{\sigma,F} + \mathcal{L}.
\end{align}
Therefore, we can view $\mathcal{A}_1^*\mathcal{A}_1$ as a compact perturbation of $I^*_{\sigma,F} I_{\sigma,F}$, where the perturbation $\mathcal{L}$ is a compact operator on $L^2(\Omega_1)$ as shown in Lemma~\ref{compact pertubation}. To achieve this, we first study the adjoint of $I_{\sigma,F}$.

\subsection{Analysis of $\mathcal{L}$ and a formula for $I^*_{\sigma, F}$}  
We first derive a formula for $I_{\sigma,F}^*$. Notice that the map $\p_+ S^\tau \Omega_1 \times (-\infty,0) \ni (z,\theta,s) \mapsto (x,\theta')\in S^\tau\Omega_1$ given by 
$x = \gamma_{z,\theta}(s)$, and $\theta' = \dot\gamma_{z,\theta}(s)$,
which is a local diffeomorphism. The inverse of the map can be found by solving
$
\gamma_{x,\theta'}(-s) = z, \, \dot\gamma_{x,\theta'}(-s) = \theta
$
and then its corresponding Jacobian is denoted by 
$$
J^b(x,\theta') := \det {\p(z,\theta,s)\over \p(x,\theta')}.
$$
Recall that $W(s,x,\theta):=e^{- \int^0_s \sigma (\gamma_{x, \theta}(t), \dot\gamma_{x,\theta}(t)) dt}.$ Then for $f\in L^2(\Omega_1)$, $g\in L^2(\p_+ S^\tau\Omega_1, d\xi)$, we have
\begin{align*}
\langle I_{\sigma,F} f,g\rangle 
&= \int_{\p_+ S^\tau \Omega_1} \int^0_{-\infty} W(s,z,\theta) f(\gamma_{z,\theta}(s)) \overline{g}(z,\theta) \,ds d\xi(z,\theta)\\ 
&= \int_{\Omega_1} \int_{S_x^\tau \Omega_1} W(x,\theta') f(x)  \overline{g}^\sharp(x,\theta') J^b(x,\theta') \,d\theta' dx\\
&= \int_{\Omega_1} f(x)  \LC \int_{S_x^\tau \Omega_1} W(x,\theta')   \overline{g}^\sharp(x,\theta') J^b(x,\theta') \,d\theta'\RC dx,
\end{align*}
where we denote
$$
g^\sharp(x,\theta') := g(z,\theta),
$$
that is, $g^\sharp$ is extended as a constant along the curve $\gamma_{z,\theta}(s)$, and $W(x,\theta') := W(s,z,\theta)$. 
Therefore, we have the adjoint of $I_{\sigma,F}$ as follows:
$$
[I^*_{\sigma,F} g](x) := \int_{S_x^\tau \Omega_1} \overline W(x,\theta') J^b(x,\theta') g^\sharp(x,\theta') \,d\theta'.
$$
This yields that (see \cite{FSU})
\begin{align*}
&\hskip.5cm[I^*_{\sigma,F} B_+ T^{-1}_1 h](x) \\
&=   \int_{S_x^\tau \Omega_1} \overline W(x,\theta') J^b(x,\theta') \int_{\R} W(\gamma_{x,\theta'}(s), \dot\gamma_{x,\theta'}(s)) h(\gamma_{x,\theta'}(s), \dot\gamma_{x,\theta'}(s))\,ds  \,d\theta'.
\end{align*}
    Therefore, when $h(x)=h(x,\theta)$, by performing similar change of variables as in Lemma~2.1 for $\widehat{K}_s$, we have, modulo a smoothing operator applied to $h$:
	\begin{align*}
	[I^*_{\sigma,F} B_+ T^{-1}_1 \Pi h](x)  =  \int_{\mathbb{S}^{n-1}} \int_{\R} B(x,r,w) h(x+rw) \,drdw,
	\end{align*}
	with $\theta'=p(x) v$ and 
	$$
	B(x,r,w) =\chi(s) J^{-1}(x,s,\theta') \overline W(x, \theta') J^b(x,\theta') W(\gamma_{x,\theta'}(s), \dot\gamma_{x,\theta'}(s))  p(x)^{n-1}|_{s=s(x,r,w),v=v(x,r,w)}.
	$$
We denote $B_{even}(x,r,w) = (B(x,r,w)+B(x,-r,-w))/2$. We can thus integrate over $r\geq 0$ and double the result
\begin{align*}
[I^*_{\sigma,F} B_+ T^{-1}_1 \Pi h](x)  = 2\int_{\mathbb{S}^{n-1}}\int^\infty_0 B_{even}\LC x,r, w\RC h(x+rw) \,drdw.  
\end{align*}
Therefore, the change of variables $y=x+rw$ leads to
\begin{align}\label{DEF:IB+T-1}
[I^*_{\sigma,F} B_+ T^{-1}_1 \Pi h](x)  =  2\int_{\Omega_1}   B_{even}\LC x, |x-y|,{x-y\over|x-y|}\RC { h(y)\over |x-y|^{n-1}}  \,dy.
\end{align}

To analyze the operator $\mathcal L$, we also need the following result regarding weakly singular integral operators.

\begin{proposition}\cite[Proposition 3.4]{SU2008}\label{bounded weakly singular}
    Let $A$ be the operator 
    $$Af(x)=\int \frac{\alpha(x,y,|x-y|,\frac{x-y}{|x-y|})}{|x-y|^{n-1}} f(y)\, dy$$
    with $\alpha(x,y,r,\theta)$ compactly supported in $x,y$. If $\alpha\in C^2$, then $A: L^2\to H^1$ is continuous with a norm not exceeding $C\|\alpha\|_{C^2}$.
\end{proposition}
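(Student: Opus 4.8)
The plan is to prove the estimate $\|Af\|_{H^1}\le C\|\alpha\|_{C^2}\|f\|_{L^2}$ by controlling the two parts of the $H^1$-norm separately, namely $\|Af\|_{L^2}$ and $\sum_{j}\|\partial_{x_j}Af\|_{L^2}$. The $L^2\to L^2$ bound for $A$ itself is elementary. Since $\alpha$ is bounded and compactly supported in $x,y$, the kernel obeys $|k(x,y)|\le \|\alpha\|_{C^0}\,|x-y|^{-(n-1)}\mathbf{1}_{\{|x-y|\le R\}}$, where $R$ bounds the diameter of the support. Because the exponent $n-1$ is strictly below $n$, the singularity is integrable on compact sets, so both $\sup_x\int|k(x,y)|\,dy$ and $\sup_y\int|k(x,y)|\,dx$ are finite and bounded by $c_nR\|\alpha\|_{C^0}$. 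Schur's test then gives $\|A\|_{L^2\to L^2}\le c_nR\|\alpha\|_{C^0}\le C\|\alpha\|_{C^2}$.

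For the gradient I would first record the convenient ``averaging'' form obtained by passing to polar coordinates $y=x-r\omega$, in which the Jacobian $r^{n-1}$ exactly cancels $|x-y|^{-(n-1)}$:
\[
Af(x)=\int_{\mathbb{S}^{n-1}}\int_0^\infty \widetilde\alpha(x,r,\omega)\,f(x-r\omega)\,dr\,d\omega,\qquad \widetilde\alpha(x,r,\omega):=\alpha\bigl(x,\,x-r\omega,\,r,\,-\omega\bigr),
\]
with $\widetilde\alpha\in C^2$, compactly supported in $x$ and in $r\in[0,R]$. This exhibits $A$ as a variable-coefficient, cut-off analogue of the Riesz potential $I_1$, which is precisely the operator one expects to gain one derivative, so the target $H^1$ regularity is natural. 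To make this rigorous I would, for $f\in C_c^\infty$ (then extend by density), differentiate the kernel directly and apply the product rule, $\partial_{x_j}k=(\partial_{x_j}\alpha)\,|x-y|^{-(n-1)}+\alpha\,\partial_{x_j}|x-y|^{-(n-1)}$, keeping careful track of the orders of singularity. The chain rule through the angular variable $\omega=(x-y)/|x-y|$ contributes a factor $|x-y|^{-1}$, since $\partial_{x_j}\omega_k=(\delta_{jk}-\omega_j\omega_k)/|x-y|$; consequently $\partial_{x_j}k$ splits into a piece of order $n-1$ with bounded amplitude (controlled by $\|\alpha\|_{C^1}$), which is weakly singular and again handled by the Schur argument above, plus a piece of the critical order $|x-y|^{-n}$ coming both from $\alpha\,\partial_{x_j}|x-y|^{-(n-1)}$ and from the angular chain rule.

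\textbf{The hard part is this critical order-$n$ piece:} differentiation turns the integrable kernel into a non-integrable Calderón--Zygmund kernel, so Schur's test no longer applies and one must exploit the cancellation encoded in the angular profile. I would handle it by the standard truncation argument: differentiate $\int_{|x-y|>\varepsilon}(\cdots)\,dy$, where the kernel is smooth, and let $\varepsilon\to0$. Because the truncation domain is centered at the moving point $x$, Reynolds' transport rule produces a boundary contribution over $\{|x-y|=\varepsilon\}$ which converges to a bounded multiplication operator $f\mapsto c(x)f(x)$, with $c(x)$ an angular average of the profile and $|c(x)|\le C\|\alpha\|_{C^1}$ (the solid-angle/jump term), while the bulk integral converges to a principal-value operator whose kernel is homogeneous of degree $-n$ in $x-y$ with angular profile that is $C^1$ on $\mathbb{S}^{n-1}$.

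Finally, the principal-value operator is a variable-coefficient Calderón--Zygmund operator: one derivative of $\alpha$ was already spent in differentiating, and the angular smoothness needed to verify the H\"ormander (cancellation) condition costs one further derivative, which is exactly where the $C^2$ hypothesis enters. The classical $L^2$-boundedness theory for singular integrals with variable kernels then yields a bound $\le C\|\alpha\|_{C^2}$. Collecting the weakly singular piece, the jump term, and the Calderón--Zygmund piece gives $\|\partial_{x_j}Af\|_{L^2}\le C\|\alpha\|_{C^2}\|f\|_{L^2}$ for each $j$; combined with the $L^2$ bound on $A$, this establishes $\|Af\|_{H^1}\le C\|\alpha\|_{C^2}\|f\|_{L^2}$, i.e. the continuity of $A:L^2\to H^1$ with norm not exceeding $C\|\alpha\|_{C^2}$.
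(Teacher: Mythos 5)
This proposition is not proved in the paper at all: it is quoted verbatim from Stefanov--Uhlmann \cite[Proposition 3.4]{SU2008}, so there is no in-paper argument to compare yours against; your proof has to stand on its own. Your outline follows what is essentially the classical route one would expect: a Schur-test bound for $\|Af\|_{L^2}$, differentiation of the kernel, a splitting into a weakly singular part of order $n-1$ controlled by $\|\alpha\|_{C^1}$ plus a critical part of order $n$, a truncation producing a bounded multiplication (jump) term, and Calder\'on--Zygmund theory for the remaining principal-value part. The plan is viable, and your accounting of why $C^2$ (rather than $C^1$) is needed is essentially right.

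However, the two steps that carry all the weight are asserted rather than proved. First, the critical kernel is not of the form $\Omega(x,\omega)/|x-y|^{n}$ to which variable-kernel Calder\'on--Zygmund theory applies: the amplitude still depends on $y$ and on $r=|x-y|$ through $\alpha$. You must freeze $\alpha$ at the diagonal, replacing $\alpha(x,y,r,\omega)$ by $\alpha(x,x,0,\omega)$, and check that the error kernel --- which vanishes to first order on the diagonal --- still yields, after differentiation, a kernel of size $O(\|\alpha\|_{C^2}|x-y|^{1-n})$, hence weakly singular and Schur-bounded; this is a second, independent place where the $C^2$ hypothesis enters (Lipschitz continuity of $\partial_\theta\alpha$ is what kills the dangerous order-$n$ terms in the difference), and your proposal never performs this reduction. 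Second, and more seriously, you never verify the mean-zero condition $\int_{\mathbb{S}^{n-1}}\Omega_j(x,\omega)\,d\omega=0$ for the critical angular profile. Without it the principal value does not define a bounded operator, and no amount of smoothness rescues this; note also that the H\"ormander condition you invoke is a \emph{regularity} condition on the kernel and is not the same thing as cancellation, so your parenthetical ``H\"ormander (cancellation) condition'' conflates two distinct hypotheses. Here cancellation does hold, but for a structural reason that deserves a line of proof: the frozen critical kernel is the exact gradient $\partial_{z_j}\bigl[\alpha(x,x,0,z/|z|)\,|z|^{1-n}\bigr]$ of a kernel homogeneous of degree $1-n$ in $z=x-y$, and integrating this gradient over an annulus $a<|z|<b$ and applying the divergence theorem forces its spherical average to vanish. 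With these two points supplied, and with the classical $L^2$ theorem for kernels $\Omega(x,\omega)/|z|^{n}$ with $\Omega(x,\cdot)$ mean zero and uniformly $C^1$ on the sphere (e.g.\ via spherical harmonics expansion), your argument closes and gives the stated bound $C\|\alpha\|_{C^2}$.
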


\begin{lemma}\label{compact pertubation}
		Suppose that 
		$$
		k(x,\theta,\theta') = \kappa_1(x,\theta) \kappa_2(x,\theta').
		$$
The operator $\mathcal{L}$ is compact on $L^2(\Omega_1)$. Moreover, $\p_x \mathcal{L}$ is also compact on $L^2(\Omega_1)$.
\end{lemma}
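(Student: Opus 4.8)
Recall from \eqref{DEF:A*A} that $\mathcal L=I_{\sigma,F}^*L+L^*I_{\sigma,F}+L^*L$ with $L=B_+T_1^{-1}(Id-KT_1^{-1})^{-1}KT_1^{-1}\Pi$. The plan is to use the separable kernel to turn every operator appearing in $\mathcal L$ into an integral operator between function spaces on $\Omega_1$ that is weakly singular in the sense of Proposition~\ref{bounded weakly singular}. Write $K=MP$, where $M\colon L^2(\Omega_1)\to L^2(S^\tau\Omega_1)$ lifts by $(Mg)(x,\theta)=\kappa_1(x,\theta)g(x)$ and $P\colon L^2(S^\tau\Omega_1)\to L^2(\Omega_1)$ averages over the fiber by $(Ph)(x)=\int_{S^\tau_x\Omega_1}\kappa_2(x,\theta')h(x,\theta')\,d\theta'$. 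Then $KT_1^{-1}=M(PT_1^{-1})$, and the push-through identity for resolvents gives
\begin{align*}
(Id-KT_1^{-1})^{-1}M=M(Id-Q)^{-1},\qquad Q:=PT_1^{-1}M,
\end{align*}
where $Id-Q$ is invertible on $L^2(\Omega_1)$ because $Id-KT_1^{-1}$ is invertible on $L^2(S^\tau\Omega_1)$ for $(\sigma,k)\in\mathscr U$ (Theorem~\ref{THM:well-posedness}). Substituting this into $L$ collapses the resolvent onto the base:
\begin{align*}
L=B_+T_1^{-1}M(Id-Q)^{-1}R,\qquad R:=PT_1^{-1}\Pi.
\end{align*}

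The crux is to show that
\begin{align*}
R,\quad Q,\quad \mathcal I^*:=I^*_{\sigma,F}B_+T_1^{-1}M,\quad \mathcal J:=M^*(B_+T_1^{-1})^*B_+T_1^{-1}\Pi,\quad \mathcal J':=M^*(B_+T_1^{-1})^*B_+T_1^{-1}M,
\end{align*}
together with their adjoints, are continuous from $L^2(\Omega_1)$ into $H^1(\Omega_1)$. Each is built from the transport solution operator $T_1^{-1}$ and/or the backprojection $(B_+T_1^{-1})^*$ averaged over the velocity fiber via $P,M^*$ or $\Pi^*$, so after integrating out the velocity it becomes an integral operator over $\Omega_1$. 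I would run the polar change of variables already used for $\widehat K_s$ in Lemma~\ref{lemma:A1A2} and for $I^*_{\sigma,F}B_+T_1^{-1}\Pi$ in \eqref{DEF:IB+T-1}: the flow map $(s,\theta')\mapsto y=\gamma_{x,\theta'}(s)$ (and its backprojected counterpart) has a Jacobian vanishing to first order at $s=0$, which upon setting $y-x=rw$ yields a kernel $\alpha(x,y,|x-y|,\tfrac{x-y}{|x-y|})/|x-y|^{n-1}$ plus a smooth remainder. Since $\sigma,\kappa_1,\kappa_2\in C^2$ and the flow is smooth, the amplitude $\alpha$ is $C^2$, and Proposition~\ref{bounded weakly singular} delivers the $L^2\to H^1$ bound; the adjoints have kernels of exactly the same type.

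Granting these bounds the rest is soft. Since $Q\colon L^2\to H^1$ is smoothing, the resolvent identity $(Id-Q)^{-1}=Id+Q(Id-Q)^{-1}$ shows that $(Id-Q)^{-1}$, and likewise $((Id-Q)^*)^{-1}$, is bounded on $H^1(\Omega_1)$. Rewriting the three pieces of $\mathcal L$ in reduced form,
\begin{align*}
I^*_{\sigma,F}L&=\mathcal I^*(Id-Q)^{-1}R,\qquad L^*I_{\sigma,F}=R^*((Id-Q)^*)^{-1}\mathcal J,\\
L^*L&=R^*((Id-Q)^*)^{-1}\mathcal J'(Id-Q)^{-1}R,
\end{align*}
I observe that every term contains a weakly singular (hence, by Rellich, $L^2$-compact) factor composed with bounded operators, so $\mathcal L$ is compact on $L^2(\Omega_1)$. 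For $\partial_x\mathcal L$, the spatial derivative falls on the leftmost factor, which in each term is $\mathcal I^*$ or $R^*$ and therefore maps $L^2\to H^1$; hence $\partial_x\mathcal I^*$ and $\partial_x R^*$ are bounded on $L^2$. The remaining right-hand factor — namely $(Id-Q)^{-1}R$, or $((Id-Q)^*)^{-1}\mathcal J$, or $((Id-Q)^*)^{-1}\mathcal J'(Id-Q)^{-1}R$ — factors through $H^1$ and is thus compact on $L^2$. A bounded operator composed with a compact one is compact, so each $\partial_x$-term, and hence $\partial_x\mathcal L$, is compact.

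The main obstacle is the second paragraph, in particular the normal-operator pieces $\mathcal J,\mathcal J'$: one must verify that composing the backprojection $(B_+T_1^{-1})^*$ with the forward transform $B_+T_1^{-1}$ and then averaging over the fiber genuinely produces a weakly singular kernel with a $C^2$ amplitude. This requires tracking the backprojection Jacobian $J^b$ together with the flow Jacobian near the diagonal and confirming the precise first-order degeneracy at $s=0$ that generates the $|x-y|^{-(n-1)}$ singularity with two derivatives of regularity to spare. The fiber averages $\Pi^*,M^*,P$ are exactly what make these operators land in function spaces on $\Omega_1$ so that Proposition~\ref{bounded weakly singular} applies, which is the structural role played by the separability of $k$.
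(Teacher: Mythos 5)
Your proposal is correct and follows essentially the same route as the paper: the same decomposition of $\mathcal L$, the same use of the separable kernel to collapse $KT_1^{-1}$ onto the base (your $(Id-Q)^{-1}R$ is exactly the paper's operator $G(Id-KT_1^{-1})^{-1}\Pi$, obtained there by commuting $KT_1^{-1}$ with the resolvent rather than by the push-through identity), and the same key step of letting $\p_x$ fall on a leftmost weakly singular factor controlled by Proposition~\ref{bounded weakly singular}. The ``main obstacle'' you flag for $\mathcal J,\mathcal J'$ is settled by the computation the paper already records in \eqref{DEF:IB+T-1}, since $\mathcal J=(I^*_{\sigma,F}B_+T_1^{-1}M)^*$ and $\mathcal J'$ differs from that operator only by the extra $C^2$ weight $\overline{\kappa_1(x,\theta')}$ in the amplitude, so the same polar change of variables applies.
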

\begin{proof}
(1) Note that $KT_1^{-1}(Id-KT_1^{-1})^{-1} \Pi =(Id-KT_1^{-1})^{-1} KT_1^{-1}\Pi$, and thus we have
$$
L = B_+T_1^{-1}KT_1^{-1}(Id-KT_1^{-1})^{-1} \Pi = B_+T_1^{-1} (Id-KT_1^{-1})^{-1} KT_1^{-1}\Pi.
$$
To show that $I^*_{\sigma, F} L$ is compact, we recall that $KT_1^{-1}\Pi$ is a compact operator in Lemma~\ref{lemma:compact 1} and, moreover, both $B_+T_1^{-1}$ and $(Id-KT_1^{-1})^{-1}$ are bounded operators. Combining all these together yields that $L$ is compact. Since $I^*_{\sigma, F}$ is a bounded operator mapping from $L^2(\p_+S^\tau\Omega_1,d\xi)$ to $L^2(\Omega_1)$, we thus obtain that $I^*_{\sigma, F} L$ is a compact operator on $L^2(\Omega_1)$.

From the above argument, we see that the operator $L$ is compact, and thus the adjoint operator $L^*$ is compact as well. Therefore, we conclude that both $L^* L$ and $L^* I_{\sigma, F}$ are compact operators. 
This completes the first part of the proof.

(2) To show that $\p_x\mathcal{L}$ is also compact, we also start by studying $\p_x I^*_{\sigma, F} L$.
From the assumption on the kernel, we can write operator $K$ with kernel $\kappa_1\kappa_2$. This gives
$$
    [KT_1^{-1} h](x,\theta) = \kappa_1(\theta) [Gh](x) 
$$
with 
$$
[Gh](x): =[\widehat{G}_r h + \widehat{G}_s h](x),
$$
where $\widehat{G}_r$ is a smooth operator and $\widehat{G}_s$ is a singular integral operator as in Lemma~\ref{lemma:A1A2}, but with the kernel depending only on $x,\theta'$. Then
$\p_x I^*_{\sigma, F} L$ can be expressed as
$$
\p_x I^*_{\sigma,F} L = (\p_x I^*_{\sigma,F} B_+T_1^{-1} \kappa_1\Pi) (G(Id-KT_1^{-1})^{-1} \Pi).
$$ 

Moreover, we have known that $(Id-KT_1^{-1})^{-1}KT_1^{-1}\Pi$ is compact from (1). The compactness of $G\Pi$ follows from the assumption on $k\in C^2$ and Proposition~\ref{bounded weakly singular}. Hence we can derive that 
$$
G(Id-KT_1^{-1})^{-1} \Pi = G(\Pi + (Id-KT_1^{-1})^{-1}KT_1^{-1}\Pi)
$$
is also a compact operator. Now it remains to show that $\p_x I^*_{\sigma,F} B_+T_1^{-1} \kappa_1 \Pi$ is bounded.
From \eqref{DEF:IB+T-1}, we have
\begin{align*}
&[I^*_{\sigma,F} B_+ T^{-1}_1 \kappa_1 \Pi 	h](x) \\
=\,&  2\int_{\Omega_1} \kappa_1(\gamma_{x,pv}(s),\dot\gamma_{x,pv}(s)) |_{ s=s(x,|x-y|,{x-y\over|x-y|}),v=v(x,|x-y|,{x-y\over|x-y|})} \\
&\hskip4cm \times B_{even}\LC x,|x-y|,{x-y\over|x-y|}\RC  {  h(y)\over |x-y|^{n-1}}  \,dy.
\end{align*}
We then have $\p_x I^*_{\sigma,F} B_+T_1^{-1} \kappa_1\Pi: L^2(\Omega_1)\rightarrow L^2(\Omega_1)$ is bounded by Proposition \ref{bounded weakly singular}. Hence $\p_x I^*_{\sigma, F} L$ is compact.

To analyze the operator $\p_x L^* L$, we follow the proof for $\p_x I^*_{\sigma, F} L$ to get
$$\p_x L^* L=(\p_x L^* B_+T_1^{-1}\kappa_1\Pi)(G(Id-KT_1^{-1})^{-1}\Pi).$$
Recall that $G(Id-KT_1^{-1})^{-1}\Pi$ is compact, therefore it suffices to show that $\p_x L^* B_+T_1^{-1}\kappa_1\Pi: L^2(\Omega_1)\to L^2(\Omega_1)$ is bounded. Note that
\begin{align*}
    \p_x L^* B_+T_1^{-1}\kappa_1\Pi & =\p_x (B_+T_1^{-1} (Id-KT_1^{-1})^{-1} KT_1^{-1}\Pi)^* B_+T_1^{-1}\kappa_1\Pi\\
    & =\p_x (KT_1^{-1}\Pi)^*(B_+ T_1^{-1}(Id-KT_1^{-1})^{-1})^* B_+T_1^{-1}\kappa_1\Pi.
\end{align*}
Since $B_+T_1^{-1}$ and $(Id-K T_1^{-1})^{-1}$ are bounded, it remains to study the operator $\p_x(K T_1^{-1}\Pi)^*$. Recall the kernel of $K T_1^{-1}\Pi$ in the proof of Lemma \ref{lemma:compact 1} and thus a similar expression holds for the adjoint $(K T_1^{-1}\Pi)^*$. Then we apply Proposition \ref{bounded weakly singular} again to conclude that $\p_x(K T_1^{-1}\Pi)^*$ is bounded, and the compactness of $\p_x L^* L$ follows.

To show that $\p_x L^* I_{\sigma, F}$ is compact, we recall that $I_{\sigma, F}|_{\p_+S^\tau \Omega_1}=B_+T^{-1}_1\Pi$, thus
\begin{align*}
  \p_x L^* I_{\sigma, F} & =\p_x L^* B_+ T^{-1}_1 \Pi  \\
  & =\p_x (KT_1^{-1}\Pi)^* (B_+ T_1^{-1}(Id-KT_1^{-1})^{-1})^* B_+ T_1^{-1}\Pi.
\end{align*}
Now the compactness of $\p_x L^* I_{\sigma, F}$ follows immediately from the proof for $\p_x L^* L$ by taking $\kappa_1\equiv 1$.


\end{proof}


\subsection{Proof of Theorem~\ref{THM:unique and stability}}
\begin{proof}[Proof of Theorem~\ref{THM:unique and stability}]
Since real analytic functions are dense in $C^2(S^\tau\overline\Omega_1)$, we can now assume that   $\sigma$ is real analytic. By \cite[Proposition~2]{FSU}, there exists a parametric $Q$ of order $1$ to the elliptic pseudodifferential operator ($\Psi DO$) $I^*_{\sigma, F} I_{\sigma, F}$ in $\Omega_1$. We can also restrict the image of $Q$ to $L^2(\Omega)$ and thus after this restriction, we can view $Q:H^1(\Omega_1) \rightarrow L^2(\Omega)$. Taking any $f\in L^2(\Omega)$, extended by zero outside $\Omega$, so that such $f$ is in $L^2(\Omega_1)$,  
we have
$$
    Q I^*_{\sigma, F} I_{\sigma, F} f = f +R f,
$$
where the operator $R $ is of order $-1$ with a smooth kernel.
From \eqref{DEF:A*A}, applying $Q$ to $\mathcal{A}_1^*\mathcal{A}_1$ yields that
\begin{equation}\label{fredholm equation}
Q \mathcal{A}_1^*\mathcal{A}_1  f = f + \widetilde R f,\qquad \widetilde R := R + Q\mathcal{L}.
\end{equation}
By Lemma \ref{compact pertubation}, we have that $\p_x \mathcal{L}:L^2(\Omega_1)\rightarrow L^2(\Omega_1)$ is a compact operator and also $\mathcal{L} f\in H^1(\Omega_1)$. This implies that $Q\mathcal{L}:L^2(\Omega_1)\rightarrow L^2(\Omega)$ is compact since $Q$ is a bounded operator. Together with the smooth operator $R$, we deduce that $\widetilde R: L^2(\Omega)\to L^2(\Omega)$ is compact as well.


{\bf (1) Uniqueness.}
The solvability of the Fredholm equation $Q \mathcal{A}_1^*\mathcal{A}_1  f = f + \widetilde R f$ is equivalent to the invertibility of $Q\mathcal{A}_1^*\mathcal{A}_1$, which then implies that $\mathcal{A}_1^*\mathcal{A}_1$ is injective.
Moreover, since the injectivity of $\mathcal{A}_1^*\mathcal{A}_1$ implies the injectivity of $\mathcal{A}_1$. Therefore, to show the statement (1), it is sufficient to show that $Q \mathcal{A}_1^*\mathcal{A}_1$ is invertible.

To this end, we first fix a real analytic $\sigma$ in $S^\tau\overline{\Omega}_1$. By \cite[Theorem 1 and 2]{FSU}, the operator $I^*_{\sigma,F} I_{\sigma,F}$ is injective and this injectivitity holds as well for small enough $C^1$ perturbations of $\sigma$. Also, due to Theorem~\ref{THM:well-posedness}, there exists an open dense set $\mathscr U$ such that the well-posedness holds in $\overline{\Omega}_1$ for these $(\sigma, k)$ in the set $\mathscr U$.
We consider the operator $\mathcal{A}_1$ with $(\sigma,\lambda k)$ with $\lambda$ in some complex neighborhood $\mathcal{C}$ of $[0,1]$.


Next we show that the operator $\widetilde R=R  + Q\mathcal{L}$  depends meromorphically on $\lambda\in \mathcal{C}$. To see this, since $L$ is meromorphic by the proof of Theorem \ref{THM:well-posedness},  $\mathcal{L}$ is
a meromorphic function of $\lambda$. This implies that the operator $\widetilde R(\lambda)$ is also a meromorphic function of $\lambda$. Also the proof of Theorem \ref{THM:well-posedness} yields that the operator $\widetilde R(\lambda)$  exists for all but a discrete set $S\subset \mathbb C$. In particular, $\mathcal C\setminus S$ is an open connected subset of $\mathbb C$ as well.  

Moreover, we show that when $\lambda=0$, $Id+\widetilde R(0)$ is invertible.
Note that when $\lambda=0$ that is $(\sigma, 0)$, we have $\mathcal{L}=0$ implying $\widetilde R=R$. Since $R $ is compact, the operator $Id+R $ has a finite dimensional kernel. By following the argument in the proof of \cite[Proposition 4, Theorem 2]{SU4}, we can add a finite rank operator to $Q$, and then the operator $Id+R$ can be arranged to be injective. As a result, $Id+R $ is invertible. 

Combining these facts together, the analytic Fredholm theorem (Proposition~\ref{analytic Fredholm} by letting $D=\mathcal C\setminus S$) implies that 
$$Q\mathcal A^*_1\mathcal A_1=Id+\widetilde R(\lambda) \qquad \hbox{ is invertible for all $\lambda\in\mathcal C$} $$
with the possible exception of a (larger) discrete set $S'\subset \mathcal C$. 
Since $\lambda=1$ is an accumulation point of $\mathcal C\setminus S'$, we further deduce that $\mathcal A_1$ is injective for $(\sigma,\lambda k)$ for those $\lambda\in \mathcal C\setminus S'$ are close to $1$. 
Finally we can conclude that there is a dense set $\mathscr V$ of pairs $(\sigma,k)$ in $\mathscr{U}$ such that $\mathcal A_1$ is injective.

{\bf (2) Stability.}  Besides the injectivity, we can derive a stability estimate in terms of the normal operator $\mathcal A_1^*\mathcal A_1$. If $(\sigma,k)\in \mathscr V$, then $Id+\widetilde R$ is invertible on $L^2(\Omega)$, the Fredholm equation \eqref{fredholm equation} implies that
\begin{equation}\label{stability 2}
\|f\|_{L^2(\Omega)}=\|(Id+\widetilde R)^{-1}Q\mathcal A_1^*\mathcal A_1 f\|_{L^2(\Omega)}\leq C \|\mathcal A_1^*\mathcal A_1 f\|_{H^1(\Omega_1)}.
\end{equation}
Moreover, similar to the proof of Theorem \ref{THM:well-posedness}, the resolvent $(Id+\widetilde R)^{-1}$ depends continuously on $(\sigma,k)$, so is $Q$. 
Therefore, applying perturbation arguments yields that the set $\mathscr V$ is open and the constant $C$ in \eqref{stability 2} is locally uniform in $(\sigma,k)$.
This completes the proof.
\end{proof} 

\begin{remark} There is an alternative proof for the stability by estimating $$\|f\|_{L^2(\Omega)}\lesssim\|\mathcal A_1^*\mathcal A_1 f\|_{H^1(\Omega_1)}+\|\widetilde Rf\|_{L^2(\Omega)}.$$
Since $\mathcal A_1^*\mathcal A_1$ is injective for $(\sigma,k)\in \mathscr V$, and $\widetilde R: L^2(\Omega)\to L^2(\Omega)$ is compact, by \cite[Proposition V.3.1]{Taylor1981}, we obtain the stability estimate \eqref{stability}.
\end{remark}


\appendix
\section{Santalo's formula}
The Santalo's formula is an integral identity that turns an integral over the interior region to the boundary and vice versa. The proof of Santalo's formula relies on the Liouville theorem, which states that the volume is preserved under Hamiltonian flows. For example, the flows associated to divergence free vector fields are volume preserving. 
It is known that a geodesic flow preserves the Liouville measure $d\Sigma^{2n-1}$, specified later, and this formula was shown in \cite{SharafutdinovNote}. The formula also holds for magnetic systems, see \cite{DPSU07}. 

In this section, we prove an analogue of the Santalo's formula for a flow under the influence of external force. Unlike the previous results with unit velocity with respect to (w.r.t.) the metric, the flow has varying velocity $\theta$ depending on the space variable. This explains the presence of the extra terms in \eqref{id:santalo}.

The main idea of the proof of Proposition~\ref{Santalo} is to transform the flow into a magnetic geodesic under a conformally Euclidean metric $g$ so that the new flow has unit speed $\eta$, an unit vector w.r.t. the metric $g$. 
Then we perform the change of variables $\theta\mapsto \eta$. By applying the Santalo's formula for magnetic system \cite{DPSU07}, the integral over the interior region is transformed to the integral over the boundary. Finally, we apply the change of variable again to obtain the desired formula.

To simplify notations, we denote
$$
P(x):=2(\tau-\varphi(x)).
$$
Recall that $p(x) = \sqrt{2(\tau-\varphi(x))}$ and the backward exit time $\ell_-$ is defined in Section~\ref{sec:notations}.

\begin{proposition}[Santalo's formula]\label{Santalo}
	Suppose that $\Omega$ is a bounded convex domain in $\R^n$ with smooth boundary. For any $f\in L^1(S^\tau\Omega)$, one has
	\begin{align}\label{id:santalo}
	\int_\Omega \int_{S_x^\tau\Omega}f(x,\theta) \,d\theta dx = \int_{\p_+ S^\tau\Omega} \LC\int^0_{\ell_-(x,\theta)} P(\gamma_{x,\theta}(s))^{1\over 2}f(\gamma_{x,\theta}(s), \dot\gamma_{x,\theta}(s)) \,ds \RC P(x)^{{-1\over 2}}\, d\xi(x,\theta),
	\end{align}
	where $d\xi(x,\theta)= \langle\theta, n(x)\rangle_e\, d\theta d\mu(x)$ and $d\mu(x)$ is Lebesgue measure on $\p\Omega$ and $n(x)$ is the unit outer normal to $\p\Omega$ w.r.t. the Eucliean metric $e$.
\end{proposition}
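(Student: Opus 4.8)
The plan is to realize the force flow as a unit-speed \emph{magnetic geodesic} flow after a conformal change of metric, so that the magnetic Santalo formula of \cite{DPSU07} applies, and then transport that identity back through the change of variables. Introduce on $\Omega$ the (Jacobi) metric $g := P(x)\,e$, conformal to the Euclidean metric $e$, where $P(x)=2(\tau-\varphi(x))>0$ on $\overline\Omega$. Reparametrize each trajectory $\gamma_{x,\theta}$ of \eqref{Newton} by $g$-arclength, i.e. introduce a new time $t$ with $dt=P(\gamma_{x,\theta}(s))\,ds$, and set $\tilde\gamma(t):=\gamma_{x,\theta}(s(t))$. A direct computation using $\nabla P=-2\nabla\varphi$, the energy relation $|\dot\gamma|_e^2=P(\gamma)$, and the conformal Christoffel symbols of $g$ shows that
$$
D^g_{\dot{\tilde\gamma}}\dot{\tilde\gamma}=\tfrac{1}{P}\,Y(\tilde\gamma)\,\dot{\tilde\gamma},\qquad |\dot{\tilde\gamma}|_g\equiv 1,
$$
where $D^g$ is the Levi-Civita connection of $g$. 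Thus $\tilde\gamma$ is a unit-speed magnetic geodesic on $(\overline\Omega,g)$ whose Lorentz force is the $g$-skew-symmetric bundle map $P^{-1}Y$ (equivalently, the magnetic two-form $\langle Y\cdot,\cdot\rangle_e$); note that $\mathrm{tr}\,Y=0$ guarantees that the flow preserves the phase-space volume, so the Liouville theorem underlying \cite{DPSU07} holds. Under $\theta\leftrightarrow\eta:=\theta/P(x)$, the fiber sphere $S^\tau_x\Omega=\{|\theta|_e=\sqrt{P(x)}\}$ is carried to the $g$-unit sphere $S_xM$, and the flow $\gamma_{x,\theta}$ corresponds to the magnetic geodesic flow $\phi_t$, with $\phi_t(x,\eta)=(\tilde\gamma(t),\dot{\tilde\gamma}(t))$.

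Next I would track the measures through this change of variables. With $dV_g=P^{n/2}\,dx$ and the fiberwise scaling $\theta\mapsto\theta/P$, one computes that the Liouville measure $d\Sigma^{2n-1}$ of $g$ on $SM$ satisfies $d\theta\,dx=P(x)^{-1/2}\,d\Sigma^{2n-1}$, while on the boundary the induced measure and the $g$-normal component combine to
$$
\langle\eta,\nu\rangle_g\,d\Sigma^{2n-2}=P(x)^{-1/2}\,d\xi(x,\theta),
$$
using that the $g$-unit outer normal is $\nu=n/\sqrt{P}$, so that $\langle\eta,\nu\rangle_g=P^{-1/2}\langle\theta,n\rangle_e$, and that the two conformal factors in $d\Sigma^{2n-2}=dV_g^{\p M}\,d\omega^g=d\mu\,d\theta$ cancel. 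These are precisely the sources of the two powers of $P$ appearing in \eqref{id:santalo}.

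Writing $\tilde F(x,\eta):=P(x)^{-1/2}f(x,P(x)\eta)$, the first measure identity turns the left-hand side of \eqref{id:santalo} into $\int_{SM}\tilde F\,d\Sigma^{2n-1}$. Applying the magnetic Santalo formula of \cite{DPSU07} to $\tilde F$ expresses this as an integral over $\p_+SM$ of the flow integral $\int\tilde F(\phi_t)\,dt$ (over the backward time interval until $\phi_t$ leaves $M$) against $\langle\eta,\nu\rangle_g\,d\Sigma^{2n-2}$. Converting back, since $P(\tilde\gamma)\dot{\tilde\gamma}=\dot\gamma$ one has $\tilde F(\phi_t)=P(\gamma(s))^{-1/2}f(\gamma(s),\dot\gamma(s))$, and with $dt=P(\gamma(s))\,ds$ the inner integral becomes $\int P(\gamma(s))^{1/2}f(\gamma(s),\dot\gamma(s))\,ds$; the backward exit time $\ell_-$ matches the $g$-exit time because $P>0$ keeps the reparametrization orientation-preserving. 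Combined with the boundary-measure identity, this gives exactly \eqref{id:santalo}.

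The main obstacle is the first step: verifying carefully that the reparametrized trajectories are genuine $g$-magnetic geodesics, i.e. the Maupertuis--Jacobi computation with the magnetic term retained, and confirming that the resulting system meets the hypotheses of \cite{DPSU07} (the Lorentz force $P^{-1}Y$ is $g$-skew-symmetric, the convexity of $\p\Omega$ with respect to $F$ transfers to $g$-convexity since $g$ is conformal to $e$, and the non-trapping, finite-time property is preserved under the orientation-preserving reparametrization). Once this geometric reduction is in place, the remaining measure bookkeeping is routine, and the weights $P^{1/2}$ and $P^{-1/2}$ are forced by the conformal factors.
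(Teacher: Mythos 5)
Your proposal follows essentially the same route as the paper's proof: the conformal Maupertuis/Jacobi metric $g=P(x)e$, reduction of the force flow to a unit-speed magnetic geodesic flow, the measure identities $dx\,d\theta=P^{-1/2}d\Sigma^{2n-1}$ and $\langle\eta,n_g\rangle_g\,d\Sigma^{2n-2}=P^{-1/2}d\xi$, and an application of the magnetic Santalo formula of \cite{DPSU07}, followed by the same change of variables back to the Euclidean picture. The only deviation is that you verify the magnetic geodesic equation $D^g_{\dot{\tilde\gamma}}\dot{\tilde\gamma}=P^{-1}Y\dot{\tilde\gamma}$ by a direct Christoffel-symbol computation (which is correct), whereas the paper cites \cite[Proposition 1]{AssylbekovZhou15} for this Maupertuis-principle step.
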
 
\begin{proof}
Since $\frac{1}{2}|\theta|^2+\varphi=\tau$, we consider the conformal Euclidean metric $g=2(\tau-\varphi)e$, where $e$ is the Euclidean metric. If $\gamma(t)$ is a curve for Hamiltonian with external force at energy level $\tau$, let 
$$s(t)=\int_0^t 2(\tau-\varphi(\gamma(a)))\, da,$$
then the Maupertuis’ principle implies that $\zeta(s):=\gamma(t(s))$ is a unit speed magnetic geodesic w.r.t. the metric $g$, see \cite[Proposition 1]{AssylbekovZhou15}. Let $S^g\Omega$ be the unit sphere bundle w.r.t. $g$, if $\eta\in S_x^g\Omega$, then $\theta=2(\tau-\varphi(x))\eta\in S_x^\tau\Omega$. We also denote
$$
\p_+ S^g\Omega :=\{(x,\eta) :\, x\in\p\Omega,\, \eta \in S^g_x\Omega,\, g(\eta,n_g)>0\},
$$
where $n_g(x)$ is the unit outer normal w.r.t metric $g$ at $x\in\p\Omega$.
Note that 
$$
ds= P(x)dt,\quad \dot{\zeta}(s) = \dot{\gamma}(t(s))P^{-1}(\gamma(t(s))) = \eta.
$$
We denote the Riemannian volume form on $\Omega$ by
$$
dV^n(x)=\sqrt{\text{det}g(x)}\,dx=P^{n/2}(x) dx
$$ 
and the Riemannian volume form on $\p\Omega$ by  
$
dV^{n-1}(x)$ 
satisfying 
\begin{align}\label{DEF:Vn-1}
dV^{n-1}(x)=P^{(n-1)/2}(x) d\mu(x),
\end{align}
where $d\mu(x)$ is the surface measure on $\p\Omega$ w.r.t. the Euclidean metric.  
Let $dw_x(\eta)$ denote the angle measure on the unit sphere $S^g_x\Omega$ with 
$$
\int_{S^g_x\Omega} dw_x(\eta)=\omega_{n-1},
$$
where $\omega_{n-1}$ is the area of the unit sphere (w.r.t. the Euclidean metric) in $\R^n$. In local coordinates, 
\begin{align}\label{def:dw}
 dw_x(\eta)=\sqrt{\mbox{det}\, g(x)}\,\sum_{i} (-1)^{i-1}\eta^i d\eta^1\wedge \cdots \wedge \widehat{d\eta^i}\wedge \cdots\wedge d\eta^n.
\end{align} 
Here $\widehat{d\eta^i}$ designates that the term $d\eta^i$ is omitted. Note that $\theta=P(x)\eta$ yields that
\begin{align}\label{dtheta}
d\theta & =\sum_i (-1)^{i-1}{\theta^i\over |\theta|} d\theta^1\wedge \cdots \wedge \widehat{d\theta^i}\wedge\cdots\wedge d\theta^n \notag\\
& =\sum_i (-1)^{i-1}{\theta^i\over p(x)} d\theta^1\wedge \cdots \wedge \widehat{d\theta^i}\wedge\cdots\wedge d\theta^n \notag\\
& = P^{n-{1\over 2}}(x) \sum_{i} (-1)^{i-1}\eta^i d\eta^1\wedge \cdots \wedge \widehat{d\eta^i}\wedge \cdots\wedge d\eta^n
= P^{n-1\over 2} dw_x(\eta).
\end{align}
Together with $dV^n(x)=P^{n/2}(x) dx$, we have 
$$dxd\theta = P^{-n\over 2}(x)dV^n(x)P^{n-1\over 2} dw_x(\eta)= P^{-1\over 2}(x) dw_x(\eta) \wedge  dV^n(x)=P^{-1\over 2}(x)d\Sigma^{2n-1},$$
where $d\Sigma^{2n-1}(x,\theta)=dw_x(\eta) \wedge  dV^n(x)$ is the Liouville measure. 
This thus implies that
\begin{align*}
\int_{S^\tau \Omega}  f(x,\theta)\, dxd\theta 
&=\int_{\Omega}\int_{S^g_x\Omega}   P^{-1\over 2}(x) f(x,P\eta)\,  d\Sigma^{2n-1}(x,\eta) \\
&=\int_{\p_+ S^g\Omega}  \int^0_{\ell_-(x,\eta)}  P^{-1\over 2}(\zeta(s)) f(\zeta(s),P\dot\zeta(s))\, ds\,  \langle \eta,n_g(x)\rangle_g d\Sigma^{2n-2}(x,\eta) \\
&=: \mathcal{I},
\end{align*}
where in the second identity above, we apply Santal\'o's formula for a compact Riemannian manifold with unit sphere bundle in \cite[Lemma A.8]{DPSU07}. Here we denote $\langle \eta,n_g(x)\rangle_g := g(\eta,n_g(x))$ and, moreover,
$$d\Sigma^{2n-2}(x,\eta)= dw_x(\eta) \wedge dV^{n-1} (x).$$
We now change the integral $\mathcal{I}$ back to the Euclidean metric. By using \eqref{dtheta} and \eqref{DEF:Vn-1}, we get
$$d\Sigma^{2n-2}= P^{-(n-1)\over 2} d\theta  P^{(n-1)\over 2}d\mu(x) = d\theta d\mu(x).$$
Hence
\begin{align*} 
\mathcal{I}
&= \int_{\p_+S^\tau\Omega}   \int^0_{\ell_-(x,\theta)}  P^{-1\over 2}(\gamma(t))f(\gamma(t), \dot\gamma(t))\, P(\gamma(t)) dt\,  P \langle P^{-1}\theta, P^{-1/2} n(x)\rangle_e\,  d\theta d\mu(x)\\
& = \int_{\p_+S^\tau\Omega}   \int^0_{\ell_-(x,\theta)} P(\gamma(t))^{1\over 2} f(\gamma(t), \dot\gamma(t))\, dt\,P(x)^{-1\over 2} \langle \theta, n(x)\rangle_e\,  d\theta  d\mu(x).
\end{align*} 
This completes the proof of the Santalo's formula.

\end{proof}

\section{Weakly singular operators}
 
\begin{proposition}[Compact operators]\label{Prop:compact operator}
	Suppose that $\beta\in L^2(S^\tau \Omega\times S^\tau \Omega)$. The operator 
	\begin{align}\label{DEF:compact operator}
    \Phi f(x,\theta) = \int\int_{S^\tau \Omega}  \beta(x,y,\theta,\theta') f(y,\theta')\,dyd\theta'
	\end{align}
	is compact on $L^2(S^\tau \Omega)$.
\end{proposition}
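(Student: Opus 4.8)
The plan is to recognize $\Phi$ as a Hilbert--Schmidt operator and to realize it concretely as an operator norm limit of finite rank operators, which are compact. The first step is to record the basic boundedness estimate. For $f\in L^2(S^\tau\Omega)$, applying the Cauchy--Schwarz inequality to the inner integral gives
\[
|\Phi f(x,\theta)|^2 \leq \LC \int\int_{S^\tau\Omega} |\beta(x,y,\theta,\theta')|^2\,dyd\theta' \RC \|f\|^2_{L^2(S^\tau\Omega)},
\]
and integrating in $(x,\theta)$ over $S^\tau\Omega$ (using Fubini, valid since $S^\tau\Omega$ carries the finite measure $d\theta\,dx$) yields
\[
\|\Phi f\|_{L^2(S^\tau\Omega)} \leq \|\beta\|_{L^2(S^\tau\Omega\times S^\tau\Omega)} \,\|f\|_{L^2(S^\tau\Omega)}.
\]
In particular $\Phi$ is bounded on $L^2(S^\tau\Omega)$, and this inequality, applied to an arbitrary kernel, shows that the operator norm of an integral operator is controlled by the $L^2$ norm of its kernel.

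Next I would approximate the kernel by finite rank ones. Since $S^\tau\Omega$ is a finite (hence $\sigma$-finite) measure space, the Hilbert space $L^2(S^\tau\Omega)$ is separable; fixing an orthonormal basis $\{e_j\}_{j\geq 1}$, the products $\{e_j(x,\theta)\overline{e_k(y,\theta')}\}_{j,k}$ form an orthonormal basis of $L^2(S^\tau\Omega\times S^\tau\Omega)$. Expanding $\beta$ in this basis and truncating, one obtains finite linear combinations
\[
\beta_N(x,y,\theta,\theta') = \sum_{j,k\leq N} c_{jk}\, e_j(x,\theta)\,\overline{e_k(y,\theta')}
\]
with $\|\beta-\beta_N\|_{L^2(S^\tau\Omega\times S^\tau\Omega)}\to 0$ as $N\to\infty$. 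Denoting by $\Phi_N$ the integral operator with kernel $\beta_N$, each $\Phi_N f = \sum_{j,k\leq N} c_{jk}\langle f, e_k\rangle\, e_j$ has finite dimensional range and is therefore compact.

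Finally, applying the boundedness estimate of the first step to the kernel $\beta-\beta_N$ gives
\[
\|\Phi-\Phi_N\|_{\mathscr L(L^2(S^\tau\Omega))} \leq \|\beta-\beta_N\|_{L^2(S^\tau\Omega\times S^\tau\Omega)} \longrightarrow 0,
\]
so $\Phi$ is the operator norm limit of the finite rank operators $\Phi_N$. Since the set of compact operators is closed in the operator norm topology, $\Phi$ is compact. There is no genuine obstacle in this argument; the only points requiring minor care are the measure-theoretic setup on $S^\tau\Omega$ (ensuring $\sigma$-finiteness so that $L^2$ is separable and Fubini applies in the Cauchy--Schwarz step) and the verification that the tensor products $e_j\overline{e_k}$ exhaust $L^2(S^\tau\Omega\times S^\tau\Omega)$, both of which are standard.
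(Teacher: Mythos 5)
Your proof is correct and takes essentially the same route as the paper's own: both establish the bound $\|\Phi\|\leq\|\beta\|_{L^2(S^\tau\Omega\times S^\tau\Omega)}$ via Cauchy--Schwarz, expand $\beta$ in a tensor-product orthonormal basis of $L^2(S^\tau\Omega\times S^\tau\Omega)$, and exhibit $\Phi$ as the operator-norm limit of the resulting finite rank (hence compact) truncations. The only differences are cosmetic (truncation indexed by $j,k\leq N$ versus $i+j\leq m$, and your explicit conjugates in the basis).
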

\begin{proof}
First we show that $\Phi$ is well-defined on $L^2(S^\tau\Omega)$ and is bounded by $\|\beta\|_{L^2(S^\tau \Omega\times S^\tau \Omega)}$.
By H\"older inequality, we have
$$
|\Phi f(x,\theta)|\leq \LC\int\int_{S^\tau \Omega} | \beta(x,y,\theta,\theta')|^2 \,dyd\theta'\RC^{1/2}\|f\|_{L^2(S^\tau\Omega)}.
$$
Then 
\begin{align*}
    \|\Phi f\|_{L^2(S^\tau\Omega)}^2
    &=\int \int_{S^\tau \Omega}|\Phi f(x,\theta)|^2 \,dxd\theta \\
    &\leq \|f\|_{L^2(S^\tau\Omega)}^2  \int\int_{S^\tau \Omega}\int\int_{S^\tau \Omega} | \beta(x,y,\theta,\theta')|^2 \,dyd\theta'dxd\theta \\
    &= \|f\|_{L^2(S^\tau\Omega)}^2\|\beta\|_{L^2(S^\tau \Omega\times S^\tau \Omega)}^2,
\end{align*}
which implies the operator norm of $\Phi$, denoted by $\|\Phi\|_*$, is bounded by $\|\beta\|_{L^2(S^\tau \Omega\times S^\tau \Omega)}$.

Let $\{\phi_k\}_{k=1}^\infty$ be orthonormal bases for $L^2(S^\tau \Omega)$.
Fubini's theorem gives that if $\psi_{ij} (x,y,\theta,\theta')= \phi_i(x,\theta)\phi_j(y,\theta')$, then $\psi_{ij}$ is an orthonormal basis for $L^2(S^\tau \Omega\times S^\tau \Omega)$. For $m=1,2,\ldots$, let 
$$
\beta_m(x,y,\theta,\theta') := \sum_{i+j\leq m} a_{ij} \psi_{ij}(x,y,\theta,\theta'),\quad a_{ij}=\left< \beta, \psi_{ij}\right>_{L^2(S^\tau\Omega\times S^\tau\Omega)} 
$$
and define the operator 
$$
	\Phi_m f(x,\theta) := \int\int_{S^\tau\Omega}  \beta_m (x,y,\theta,\theta') f(y,\theta')\,dyd\theta'.
$$
Then $\Phi_m$ has finite rank and thus $\Phi_m$ is a compact operator.

On the other hand, 
$$
	\|\beta -\beta_m\|^2_{L^2(S^\tau \Omega\times S^\tau \Omega)} = \sum_{i+j>m} |a_{ij}|^2\rightarrow 0 \qquad\hbox{ as }m\rightarrow \infty,
	$$
as a result, 
  $$\|\Phi-\Phi_m\|_*\leq \|\beta-\beta_m\|_{L^2(S^\tau \Omega\times S^\tau \Omega)} \rightarrow 0 \qquad\hbox{ as }m\rightarrow \infty.
  $$
Therefore, $\Phi$ is the norm limit of operators of finite rank. This implies that $\Phi$ is a compact operator. 
\end{proof}

\begin{proposition}\label{Prop:compact operator 1}
	Suppose that $\Omega$ is a bounded domain in $\R^n$. Suppose that $\beta$ satisfies
	$$
	|\beta(x,y,\theta,\theta')|\leq C_1|x-y|^{1-n},\qquad  (x,y,\theta,\theta')\in S^\tau \Omega\times S^\tau \Omega
	$$
	for some positive constant $C_1$.
	Then the linear operator $\Phi:L^2(S^\tau \Omega)\rightarrow L^2(S^\tau \Omega)$ of the form
	\begin{align}\label{DEF: operator 1}
	\Phi f(x,\theta) = \int\int_{S^\tau \Omega}  \beta(x,y,\theta,\theta') f(y,\theta')\,dyd\theta'.
	\end{align}
	is compact on $L^2(S^\tau \Omega)$.
\end{proposition}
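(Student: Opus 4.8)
The plan is to circumvent the fact that the kernel $\beta$ need not lie in $L^2(S^\tau\Omega\times S^\tau\Omega)$: since $|\beta|^2\lesssim|x-y|^{2(1-n)}$ and $2(1-n)\leq -n$ for $n\geq 2$, the singularity is not square-integrable across the diagonal, so Proposition~\ref{Prop:compact operator} cannot be applied to $\beta$ directly. Instead I would approximate $\Phi$ in operator norm by truncating the singularity. For $\varepsilon>0$, set
$$
\beta_\varepsilon(x,y,\theta,\theta') := \beta(x,y,\theta,\theta')\,\mathbf{1}_{\{|x-y|\geq \varepsilon\}},
$$
and let $\Phi_\varepsilon$ be the associated operator. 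On the support of $\beta_\varepsilon$ one has $|\beta_\varepsilon|\leq C_1\varepsilon^{1-n}$, and since $\Omega$ is bounded and $\max_{\overline\Omega}\varphi<\tau$, the fibers $S_x^\tau\Omega$ are spheres of radius $p(x)$ bounded above and below, so $S^\tau\Omega$ has finite measure and $\beta_\varepsilon\in L^2(S^\tau\Omega\times S^\tau\Omega)$. Proposition~\ref{Prop:compact operator} then shows that $\Phi_\varepsilon$ is compact for every $\varepsilon>0$.

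The second step is to control the remainder $\Phi-\Phi_\varepsilon$, whose kernel $\beta-\beta_\varepsilon$ is supported in $\{|x-y|<\varepsilon\}$ and still obeys the bound $C_1|x-y|^{1-n}$. Here I would invoke a Schur-type estimate: for each fixed $(x,\theta)$,
$$
\int\!\!\int_{S^\tau\Omega} |\beta-\beta_\varepsilon|\,dy\,d\theta' \leq C_1\,C_2 \int_{\{|x-y|<\varepsilon\}\cap\Omega} |x-y|^{1-n}\,dy,
$$
where $C_2$ bounds the finite fiber measure. Passing to polar coordinates about $x$ gives $\int_{\{|x-y|<\varepsilon\}}|x-y|^{1-n}\,dy = \omega_{n-1}\varepsilon$, so this quantity is $O(\varepsilon)$, uniformly in $(x,\theta)$; the symmetric estimate obtained by integrating instead in $(x,\theta)$ is identical. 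A weighted Cauchy--Schwarz argument (the Schur test) then yields $\|\Phi-\Phi_\varepsilon\|_*\leq C\varepsilon$. Finally, since $\Phi_\varepsilon$ is compact for every $\varepsilon>0$ and $\|\Phi-\Phi_\varepsilon\|_*\to 0$ as $\varepsilon\to 0$, the operator $\Phi$ is a norm limit of compact operators, hence compact.

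The crux of the argument—and the only place where the precise exponent $1-n$ enters—is the remainder estimate: it is exactly the local integrability of $|x-y|^{1-n}$ in $\R^n$, producing the linear factor $\varepsilon$, that makes $\Phi-\Phi_\varepsilon$ small in operator norm. I expect this Schur-test bound, together with correctly accounting for the bounded fiber measure of the sphere bundle $S^\tau\Omega$, to be the main technical point; the compactness of each $\Phi_\varepsilon$ and the norm-limit conclusion are then routine consequences of Proposition~\ref{Prop:compact operator}.
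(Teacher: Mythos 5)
Your proposal is correct and follows essentially the same route as the paper's own proof: the paper also truncates the kernel near the diagonal (at radius $1/m$ rather than $\varepsilon$), applies Proposition~\ref{Prop:compact operator} to the resulting square-integrable kernel, and controls the remainder via the same Schur-test/Cauchy--Schwarz estimate based on the local integrability of $|x-y|^{1-n}$, concluding that $\Phi$ is a norm limit of compact operators.
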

\begin{proof}
	For any positive integer $m$, we first define the truncated kernels $\hat\beta_m(x,y,\theta,\theta')$ defined by
	$$
	\hat\beta_m(x,y,\theta,\theta') := \left\{
	\begin{array}{ll}
	\beta(x,y,\theta,\theta') & \hbox{if }|x-y|\geq 1/m,\\
	0 & \hbox{otherwise}.\\
	\end{array}\right.
	$$
	Then $\hat\beta_m$ is in $L^2(S^\tau \Omega\times S^\tau\Omega)$.  
	We define the operator
	$$
	\hat\Phi_m f(x,\theta) := \int\int_{S^\tau\Omega}  \hat\beta_m(x,y,\theta,\theta') f(y,\theta')\,dyd\theta'
	$$
	for all positive integer $m$. By Proposition~\ref{Prop:compact operator}, $\hat\Phi_m$ is compact.
	
	Next we will show that $\|\Phi-\hat\Phi_m\|_* \rightarrow 0$ as $m\rightarrow \infty$.
	To establish this, we denote
	$$
	\varepsilon_m(x,\theta):=\int\int_{S^\tau\Omega}  
	|\beta(x,y,\theta,\theta')-\hat\beta_m(x,y,\theta,\theta')|\,dyd\theta' 
	$$
	and obtain that
	\begin{align*}
	|\varepsilon_m(x,\theta)|&\leq \int\int_{|x-y|<1/m}  | \beta (x,y,\theta,\theta')|\,dyd\theta'\\
	&\leq \int\int_{|x-y|<1/m}  {C_1\over|x-y|^{n-1}}\,dyd\theta'\\
	&\leq Cm^{-1},
	\end{align*}
	where the constant $C$ depends only on $n, \Omega,\tau$ and the function $p$. 
	Then $\varepsilon_m \rightarrow 0 $ when $m\rightarrow \infty$. Note that this convergence is independent of $x$ and $\theta$.

	For every $f\in L^2(S^\tau\Omega)$, we estimate
	\begin{align*}
	&\hskip.5cm\|(\Phi -\hat\Phi_m) f\|_{L^2(S^\tau\Omega)}^2 \\
	&\leq \int\int_{S^\tau\Omega}\LV\int\int_{S^\tau\Omega}  \LC \beta(x,y,\theta,\theta')-\hat\beta_m(x,y,\theta,\theta')\RC  f(y, \theta')  \,dyd\theta'\RV^2 \,dxd\theta\\ 
	&\leq \int\int_{S^\tau\Omega}\LC\int\int_{S^\tau\Omega}  
	|\beta(x,y,\theta,\theta)-\hat\beta_m(x,y,\theta,\theta')|\,dyd\theta'\RC    \\
	&\hskip2cm\LC\int\int_{S^\tau\Omega}  |\beta(x,y,\theta,\theta')-\hat\beta_m(x,y,\theta,\theta')||f(y, \theta')|^2 \,dyd\theta' \RC \,dxd\theta\\
	&\leq C m^{-1}\int\int_{S^\tau\Omega}   
	\LC\int\int_{S^\tau\Omega}  |\beta(x,y,\theta,\theta')-\hat\beta_m(x,y,\theta,\theta')||f(y, \theta')|^2 \,dyd\theta' \RC \  \,dxd\theta\\
    &\leq C m^{-2}\int\int_{S^\tau\Omega}   
	 |f(y, \theta')|^2 \,dyd\theta'  \\
	&\leq C m^{-2} \|f\|_{L^2(S^\tau\Omega)}^2,
	\end{align*}
	which leads to $\|\Phi -\hat\Phi_m\|_*\rightarrow 0$ as $m\rightarrow \infty$. Finally, we conclude that $\Phi$ is a compact operator since it is the limit of compact operators $\hat\Phi_m$.
\end{proof}

\section*{Acknowledgments}
This work is partially supported by the National Science Foundation through grants DMS-2006731.

\bibliographystyle{plain}
\bibliography{externalforce}

\end{document}